\newtheorem{theorem}{Theorem}[section]
\newtheorem{lemma}[theorem]{Lemma}
\newtheorem{assertion}[theorem]{Assertion}
\newtheorem{observation}[theorem]{Observation}
\newtheorem{corollary}[theorem]{Corollary}
\newtheorem{conjecture}[theorem]{Conjecture}
\theoremstyle{remark}
\newtheorem{notation}[theorem]{Notation}
\newtheorem{remark}[theorem]{Remark}
\newtheorem{definition}[theorem]{Definition}
\newtheorem{example}[theorem]{Example}
\newcommand{\cf}{\mathcal F}
\newcommand{\ch}{\mathcal H}
\begin{document}

\makeatletter

\makeatother
\author{Ron Aharoni}
\address{Department of Mathematics\\ Technion}
\email[Ron Aharoni]{raharoni@gmail.com}
\thanks{\noindent The research of the first author was
supported by BSF grant no. $2006099$, by GIF grant no. I
$-879-124.6/2005$, by the Technion's research promotion fund, and by
the Discont Bank chair.}

\author{David Howard}
\address{Department of Mathematics\\ Colgate University}
\email[David Howard]{dmhoward@colgate.edu}
\thanks{\noindent The research of the second author was
supported by BSF grant no. $2006099$, and by ISF grants Nos.
$779/08$, $859/08$ and $938/06$.
 }

\begin{abstract}
Two hypergraphs $H_1,\ H_2$ are called  {\em cross-intersecting} if
$e_1 \cap e_2 \neq \emptyset$ for every pair of edges $e_1 \in
H_1,~e_2 \in H_2$. Each of the hypergraphs is  then said to {\em
block} the other. Given parameters $n,r,m$ we determine the maximal
size of a sub-hypergraph of $[n]^r$ (meaning that it is $r$-partite,
with all sides of size $n$)
  for which there exists a blocking
  sub-hypergraph of $[n]^r$ of size $m$. The answer involves a fractal-like (that is, self-similar) sequence, first studied by Knuth.
We also study the same question with $\binom{n}{r}$ replacing $[n]^r$.
 \end{abstract}

\title{Cross-intersecting pairs of hypergraphs}
\maketitle
\section{Blockers in $r$-partite hypergraphs}
\subsection{Blockers}\label{kk}
For a set $A$ and a number $r$ let $\binom{A}{r}$ be the set of all subsets of size $r$ of $A$. Given numbers $r$ and $n$ let $[n]=\{1, 2,
\ldots,n\}$,   and let $[n]^r$ be the complete
$r$-partite hypergraph with all sides being equal to $[n]$. Let $U$
be either $\binom{[n]}{r}$ or $[n]^r$, and let $F$ be a
sub-hypergraph of $U$. The {\em blocker} $B(F)$  of  $F$  is the set
of those edges of $U$ that meet all edges of $F$. For a number $t$
we denote by $b(t)$ the maximal size of $|B(F)|$, where $F$ ranges
over all sets of $t$ edges in $U$. Which of the two meanings of
``$b(t)$'' we are using will be clear from the context.

\subsection{Background - the Erd\H{o}s-Ko-Rado theorem and rainbow matchings}

A {\em matching} is a collection of disjoint sets.
 The largest size of a matching in a hypergraph
$H$ is denoted by $\nu(H)$. The famous Erd\H{o}s-Ko-Rado (EKR)
theorem \cite{ekr} states that if $r\le \frac{n}{2}$ and a hypergraph $H
\subseteq \binom{[n]}{r}$ has more than $\binom{n-1}{r-1}$ edges,
then $\nu(H)
>1$.  This has been extended in more
than one way to pairs of hypergraphs. For example, in \cite{mt,
pyber} the following was proved:
\begin{theorem}\label{mt}
If $r \le \frac{n}{2}$, and $H_1,H_2\subseteq \binom{[n]}{r}$ satisfy
$|H_1||H_2|>\binom{n-1}{r-1}^2$ (in particular if
 $|H_i|>\binom{n-1}{r-1},~i=1,2$),
  then there
exist disjoint edges, $e_1 \in H_1,~e_2 \in H_2$.
\end{theorem}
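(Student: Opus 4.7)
I prove the contrapositive: if $H_1,H_2 \subseteq \binom{[n]}{r}$ are cross-intersecting (i.e.\ $e_1\cap e_2 \ne \emptyset$ for all $e_1\in H_1$, $e_2\in H_2$), then $|H_1|\cdot|H_2| \le \binom{n-1}{r-1}^2$.

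The natural tool is Frankl's shifting (compression) operator. For $1\le i<j\le n$, the operator $S_{ij}$ replaces each edge $e$ containing $j$ but not $i$ by $(e\setminus\{j\})\cup\{i\}$, provided the latter is not already in the family. A routine verification shows that applying $S_{ij}$ \emph{simultaneously} to $H_1$ and $H_2$ preserves both sizes and preserves the cross-intersection property. Iterating until stable, I may assume both $H_1$ and $H_2$ are shifted.

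I then proceed by induction on $n$, with base case $n=2r$. Split each family at the element $n$:
\[
A_k = \{e\setminus\{n\} : n\in e\in H_k\} \subseteq \binom{[n-1]}{r-1},\qquad
B_k = H_k\cap \binom{[n-1]}{r}.
\]
The cross-intersection of $H_1,H_2$ descends to three conditions on the ground set $[n-1]$: the pair $(B_1,B_2)$ is cross-intersecting in $\binom{[n-1]}{r}$, while the pairs $(A_1,B_2)$ and $(A_2,B_1)$ are \emph{mixed-uniformity} cross-intersecting (an $(r{-}1)$-set must meet an $r$-set). Combining the inductive bound on $(B_1,B_2)$ with a companion mixed-uniformity bound $|A|\cdot|B|\le \binom{n-2}{r-2}\binom{n-2}{r-1}$, proved in parallel by the same induction, and invoking Pascal's identity $\binom{n-1}{r-1}=\binom{n-2}{r-2}+\binom{n-2}{r-1}$, yields
\[
(|A_1|+|B_1|)(|A_2|+|B_2|) \le \left(\binom{n-2}{r-2}+\binom{n-2}{r-1}\right)^{\!2} = \binom{n-1}{r-1}^{\!2}.
\]

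The main obstacle is the inductive step: one must track both the $r$-uniform statement and its mixed-uniformity companion in tandem, verifying that shifting propagates correctly to $A_k$ and $B_k$ so that the required bounds actually hold. This is precisely where the hypothesis $r\le n/2$ enters essentially, via the base case and via ensuring that mixed stars are extremal. An alternative route, which avoids this bookkeeping, invokes the Hoffman/ratio eigenvalue bound on the Kneser graph $K(n,r)$ directly for bi-independent vertex pairs: the smallest-eigenvalue computation recovers $|H_1|\cdot|H_2|\le \binom{n-1}{r-1}^{2}$ precisely when $2r\le n$, at the cost of a less combinatorial argument that does not expose the extremal configurations.
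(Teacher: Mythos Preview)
The paper does not give its own proof of this theorem: it is quoted as background, with attribution to Matsumoto--Tokushige and Pyber, and the paper's own contribution in the $\binom{[n]}{r}$ setting (Section~2) is the computation of $b(t)$ via Kruskal--Katona, not a product bound. So there is no in-paper argument to compare against; your proposal is in fact a sketch of the Matsumoto--Tokushige shifting proof itself.

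On its own merits your sketch has a concrete gap. When you expand
\[
(|A_1|+|B_1|)(|A_2|+|B_2|)=|A_1||A_2|+|A_1||B_2|+|B_1||A_2|+|B_1||B_2|,
\]
you control $|B_1||B_2|$ by induction and the two mixed terms by your companion bound, but you never bound $|A_1||A_2|$. Nothing you wrote forces $A_1$ and $A_2$ to be cross-intersecting: if $e_1\in H_1$ and $e_2\in H_2$ both contain $n$, their intersection could be exactly $\{n\}$, so $e_1\setminus\{n\}$ and $e_2\setminus\{n\}$ may be disjoint. Without this term the displayed inequality simply does not follow from the three pieces you list.

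The fix uses the shifting hypothesis and is where $r\le n/2$ is genuinely needed in the inductive step, not only in the base case. If $a_1\in A_1$, $a_2\in A_2$ are disjoint, then $|a_1\cup a_2\cup\{n\}|=2r-1<n$, so there is some $j\in[n-1]\setminus(a_1\cup a_2)$; shifting $n\to j$ in $a_2\cup\{n\}\in H_2$ gives $a_2\cup\{j\}\in H_2$, which is disjoint from $a_1\cup\{n\}\in H_1$, a contradiction. Hence $A_1,A_2$ are cross-intersecting in $\binom{[n-1]}{r-1}$, and a parallel induction (now with $r-1\le (n-1)/2$) supplies $|A_1||A_2|\le\binom{n-2}{r-2}^2$. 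With that fourth piece in hand your Pascal-identity assembly goes through. You should also state the induction scheme explicitly (it is really a double induction, or an induction on $n$ over all admissible $r$ simultaneously), since the $(A_1,A_2)$ step drops $r$ as well as $n$.
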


In \cite{mt} this was also extended to hypergraphs of different
uniformities. In \cite{tok, tok13} a version of this result was
proved for $t$-intersecting pairs of hypergraphs, for large enough
$n$.

It is natural to try to extend the EKR theorem to more than two hypergraphs. The
relevant notion is that of ``rainbow matchings".

\begin{definition}
 Let $\cf=(F_i \mid~~1 \le i \le k)$ be a collection of hypergraphs. A
 choice of disjoint edges, one from each $F_i$, is called a {\em
 rainbow matching} for $\cf$.
 \end{definition}

\begin{notation}
 Given numbers $n,r,k$ satisfying $kr \le n$, let $f(n,r,k)$ be
the smallest number such that $\nu(H)\ge k$ for every $H \subseteq
\binom{[n]}{r}$ larger than $f(n,r,k)$.

Given any numbers $n,r,k$, let $g(n,r,k)$ be the smallest number
such that $\nu(H)\ge k$ for every $H \subseteq [n]^r$ larger than
$g(n,r,k)$. \end{notation}

\begin{lemma}
$g(n,r,k)=(k-1)n^{r-1}$.
\end{lemma}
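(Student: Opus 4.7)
The plan is to prove the equality by matching lower and upper bounds. For the lower bound $g(n,r,k)\ge (k-1)n^{r-1}$, I would exhibit a single hypergraph attaining the bound without a matching of size $k$: the hypergraph $H_0=\{e\in[n]^r : e_1\in[k-1]\}$ has exactly $(k-1)n^{r-1}$ edges and clearly $\nu(H_0)\le k-1$, since any matching uses pairwise distinct first coordinates, all of which lie in a set of size $k-1$.

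For the upper bound $g(n,r,k)\le (k-1)n^{r-1}$, the key idea is to decompose the complete $r$-partite hypergraph $[n]^r$ into $n^{r-1}$ perfect matchings. Identifying each side with $\mathbb{Z}_n$, for every $c=(c_2,\dots,c_r)\in\mathbb{Z}_n^{r-1}$ I set
\[
M_c=\{(t,\,t+c_2,\,t+c_3,\,\dots,\,t+c_r)\bmod n : t\in\mathbb{Z}_n\}.
\]
Each $M_c$ is a matching of size $n$: two distinct edges in $M_c$ have different values of $t$, hence differ in every coordinate. Moreover, the map $e=(e_1,\dots,e_r)\mapsto(e_2-e_1,\dots,e_r-e_1)$ assigns to every edge of $[n]^r$ a unique label $c\in\mathbb{Z}_n^{r-1}$, so the $M_c$ form a partition of $[n]^r$ into $n^{r-1}$ matchings.

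Once this decomposition is in hand, the upper bound follows by a one-line pigeonhole argument: given any $H\subseteq[n]^r$ with $|H|>(k-1)n^{r-1}$, since $H=\bigsqcup_{c}(H\cap M_c)$ is a disjoint union of $n^{r-1}$ sets, some intersection $H\cap M_c$ has size at least $k$. Being a subset of the matching $M_c$, this intersection is itself a matching in $H$, so $\nu(H)\ge k$.

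The only step with any content is verifying that the groups-theoretic formula defines a partition of $[n]^r$ into matchings; everything else is pigeonhole and a one-line construction. I expect no real obstacle — the verification is a direct check using the fact that two edges receive the same label exactly when they agree in every coordinate difference, and therefore are either identical or disjoint.
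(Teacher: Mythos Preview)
Your proof is correct and follows essentially the same approach as the paper: the same extremal construction for the lower bound, and the same pigeonhole argument over a decomposition of $[n]^r$ into $n^{r-1}$ perfect matchings for the upper bound. The only difference is that you supply an explicit decomposition via cyclic shifts, whereas the paper merely asserts that such a decomposition is easy to see.
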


\begin{proof}
To see that $g(n,r,k)\ge (k-1)n^{r-1}$ take $F$ to be  the set of
edges containing any of given $k-1$ vertices in the same side:
$|F|=(k-1)n^{r-1}$ and since the covering number is $k-1$ there is
no matching of size $k$. To show that $g(n,r,k)\le (k-1)n^{r-1}$,
let $F$ be any set of edges of size larger than $(k-1)n^{r-1}$. It
is easy to see that $[n]^r$ is the union of $n^{r-1}$ perfect
matchings $M_i$. By the pigeonhole principle $|F \cap M_i|>k-1$ for
some $i$, and since $F \cap M_i$ is a matching, it follows that
$\nu(F \cap M_i)\ge k$.
\end{proof}

The function $f(n,r,k)$ is hard to determine, see \cite{frankl,frr}
for estimates.

The motivation behind this paper is the following  conjecture:

\begin{conjecture}\label{rainbow}\hfill
\begin{enumerate}
\item
If $F_1, \ldots F_k$ are sub-hypergraphs of ~$[n]^r$, each of size
larger than $(k-1)n^{r-1}$, then there exists a choice of disjoint
edges $e_1 \in F_1, \ldots, e_k \in F_k$.
\item
If $F_1, \ldots ,F_k$ are sub-hypergraphs of $\binom{[n]}{r}$, all
larger than $f(n,r,k)$,  then there exists a choice of disjoint
edges $e_1 \in F_1, \ldots, e_k \in F_k$.
\end{enumerate}
\end{conjecture}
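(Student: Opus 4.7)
The plan is induction on $k$. For $k=1$ both parts are trivial, and the case $k=2$ is precisely the cross-intersecting question this paper is built around: part (2) for $k=2$ is Theorem~\ref{mt}, while part (1) for $k=2$ amounts to the bound $b(t) \le n^{r-1}$ for $t > n^{r-1}$ (since cross-intersecting $F_1,F_2$ means $F_2 \subseteq B(F_1)$), which falls under the blocker computation $b(\cdot)$ introduced in Subsection~\ref{kk}.

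For $k \ge 3$, the natural move is to pick an edge $e_k \in F_k$ such that, after passing to the sub-hypergraph on the vertices outside $e_k$, each of $F_1,\ldots,F_{k-1}$ still exceeds the inductive threshold -- namely $(k-2)(n-1)^{r-1}$ for part (1), or $f(n-r,r,k-1)$ for part (2). The inductive hypothesis applied inside the reduced ground set $[n-1]^r$ (respectively $\binom{[n-r]}{r}$) would then produce a rainbow matching $e_1,\ldots,e_{k-1}$ disjoint from $e_k$, completing the step.

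The main obstacle, and the likely reason the statement is offered as a conjecture rather than a theorem, is locating such an $e_k$. Denoting by $N(e)$ the set of edges meeting $e$, a double incidence count controls the average of $|F_i \cap N(e_k)|$ as $e_k$ ranges over $F_k$, but the bound is not sharp enough: deleting $N(e_k)$ in part (1) can strip as many as $n^r-(n-1)^r \approx rn^{r-1}$ edges of $F_i$, while the inductive threshold decreases only by roughly $n^{r-1}$. In adversarial configurations -- for example, when some $F_i$ is concentrated as a near-star whose apex lies on every edge of $F_k$ -- the slack in the averaging argument collapses. To push past this I would combine the choice of $e_k$ with a shifting/compression step, moving the $F_i$ toward a common canonical form so that in the extremal case each becomes a union of stars, and then iterate the blocker estimate $b(\cdot)$ of this paper to extract $e_k$. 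A complementary fractional attack via LP duality reduces the question to ruling out simultaneous weighted blockers of $F_k$, again exploiting the self-similar behaviour of $b(\cdot)$. Either way, the decisive step is an inductive inequality about the blocker function itself, and it is there that the conjecture sits.
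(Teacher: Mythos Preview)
The statement is a \emph{conjecture} in the paper, not a theorem; the paper offers no proof of it. What the paper actually establishes is the case $k=2$ of part~(1), as Corollary~\ref{largesizes} (a direct consequence of Theorem~\ref{kkr}), and it cites Theorem~\ref{mt} for $k=2$ of part~(2) and \cite{ah} for part~(1) when $r\le 3$. There is therefore no paper proof to compare your attempt against.

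Your proposal is likewise not a proof. You set up the natural induction on $k$ and correctly diagnose why it stalls: removing an edge $e_k$ together with all edges meeting it can cost each $F_i$ as many as $n^r-(n-1)^r\sim rn^{r-1}$ edges, while the threshold drops by only about $n^{r-1}$. But the remedies you propose --- shifting the $F_i$ to a canonical form and then ``iterating the blocker estimate'', or an LP-duality/fractional-blocker argument --- are only gestured at, not executed. Shifting alone does not dissolve the obstacle (the standard extremal configurations are already shifted), and the paper contains no ``inductive inequality about the blocker function'' that handles $k\ge 3$; the blocker computation developed here is intrinsically about pairs. The step you single out as decisive is precisely the open part of the problem, so what you have written is a reasonable research outline, not a proof.
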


In \cite{ah} part (1) of this conjecture was proved  for $r\le 3$.
The case $k=2$ of part (1)  follows from Theorem \ref{main} below,
and was proved independently by Alon \cite{alon}, using a spectral
method. Theorem \ref{main} is more general: it determines the
maximal size of $|B(H)|$ for a sub-hypergraph $H$ of $[n]^r$, given
its size. In particular, it implies that if $|H|>n^{r-1}$ then
$|B(H)|<n^{r-1}$, which is the case $k=2$ of part (1).

In the second section we shall turn to the case $U=\binom{[n]}{r}$.
Daykin \cite{daykin} showed how the EKR theorem can be derived from
the Kruskal-Katona theorem. His proof also yields the case $r=2$ of
Conjecture \ref{rainbow}. The idea of the proof is that if $ |F|$ is
large then, by the Kruskal-Katona theorem,  the $r$-shadow of the
complements of the sets in $F$ is large, and hence the number of the
$r$-sets that meet all edges in $F$ is small. We extend this idea
and find the maximal size of $|B(H)|$ for a sub-hypergraph $H$ of
$\binom{[n]}{r}$, given the cardinality of $H$.

\subsection{A self-similar sequence}

Denote the sides of $[n]^r$ by  $V_1, \ldots, V_r$ (so, all $V_i$'s
are of size $n$). Choose one vertex $v_i$ from each $V_i$. Let
$\Psi_r$ be the set of (possibly empty) sequences $\sigma$ of length
at most $r-1$ consisting of $\wedge$'s and $\vee$'s. Let
$\Sigma_r=\Psi_r \cup \{\alpha, \omega\}$, where
  $\alpha=\alpha_r$ and $\omega=\omega_r$ are new elements. Note that $|\Sigma_r|=2^r+1$.
We define hypergraphs $F_r(\sigma)$ for all $\sigma \in \Sigma_r$,
as follows. Let $F_r(\alpha)=\emptyset$ and $F_r(\omega)= [n]^r$.
For a sequence $\sigma \in \Psi_r$  having length $m \ge 0$, and
whose $j$-th term is denoted by $\sigma_j$ ~($j \le m$), let:
$$F_r(\sigma)=\{e \in [n]^r \mid  v_1 \in e~ \sigma_1 (v_2 \in e~\sigma_2 (v_3 \in e \ldots \sigma_{m}(v_{m+1} \in e)\ldots )\}$$
\\
For example, $F_r(\emptyset)=\{e \in [n]^r \mid v_1 \in e\}$ and
$F_r(\wedge,\wedge,\vee)$ is the set of edges $e \in [n]^r$  satisfying:
$$v_1 \in e \wedge (v_2 \in e \wedge (v_3 \in e \vee(v_4 \in e)))$$

Let $f_r(\sigma)=|F_r(\sigma)|$.

\begin{lemma}\label{frsigma}
\hspace{.1cm}

If $\sigma \in \Psi_{r-1}$ then
\begin{enumerate}
\item
$f_r(\sigma)=nf_{r-1}(\sigma)$
\item
$f_r(\wedge,\sigma)=f_{r-1}(\sigma)$
\item
$f_r(\vee,\sigma)=n^{r-1}+(n-1)f_{r-1}(\sigma)$

\end{enumerate}
\end{lemma}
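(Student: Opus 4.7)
The plan is to read each identity directly from the recursive structure of the Boolean formula defining $F_r(\sigma)$. Write an edge $e \in [n]^r$ as a tuple $(e_1, \ldots, e_r)$ with $e_i \in V_i$, so that the predicate $v_i \in e$ is simply the statement $e_i = v_i$; thus the formula depends only on the coordinates $e_i$ indexed by the $v_i$'s that appear. Throughout, I identify the sides $V_2, \ldots, V_r$ of $[n]^r$ with a copy of $[n]^{r-1}$, with designated vertices $v_2, \ldots, v_r$; the bracketed sub-formula appearing in the definition of $F_r(\wedge,\sigma)$ or $F_r(\vee,\sigma)$ then becomes exactly the formula defining $F_{r-1}(\sigma)$ on this smaller hypergraph.

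For Part (1), a sequence $\sigma \in \Psi_{r-1}$ has length $m \le r-2$, so its formula mentions only $v_1, \ldots, v_{m+1}$ and in particular does not mention $v_r$. Hence $F_r(\sigma)$ is the Cartesian product of $F_{r-1}(\sigma)$ (viewed on the sides $V_1, \ldots, V_{r-1}$) with the full side $V_r$, yielding $f_r(\sigma) = n f_{r-1}(\sigma)$.

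For Part (2), the formula $v_1 \in e \wedge (\cdots)$ holds iff $e_1 = v_1$ and the bracketed sub-formula holds on the coordinates $e_2, \ldots, e_r$. The first coordinate is forced, while the remaining coordinates contribute $f_{r-1}(\sigma)$ choices, giving $f_r(\wedge, \sigma) = f_{r-1}(\sigma)$.

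For Part (3), the formula $v_1 \in e \vee (\cdots)$ splits into two disjoint cases. If $e_1 = v_1$ the condition is automatically satisfied and the remaining $r-1$ coordinates are unconstrained, contributing $n^{r-1}$ edges. If $e_1 \ne v_1$ the sub-formula must hold, contributing $(n-1) f_{r-1}(\sigma)$ edges ($n-1$ choices for $e_1$ times $f_{r-1}(\sigma)$ choices for $e_2, \ldots, e_r$). Summing yields $f_r(\vee,\sigma) = n^{r-1} + (n-1) f_{r-1}(\sigma)$. There is no substantial obstacle; the only point that needs to be checked carefully is the identification of the bracketed sub-formula with the defining formula of $F_{r-1}(\sigma)$ on the smaller hypergraph, which is immediate from the right-associative parsing of the expression.
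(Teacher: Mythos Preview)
Your proof is correct and follows essentially the same approach as the paper: you exhibit $F_r(\sigma)=F_{r-1}(\sigma)\times V_r$ for Part~(1), the forced first coordinate $e_1=v_1$ for Part~(2), and the disjoint decomposition $\{v_1\}\times V_2\times\cdots\times V_r \,\cup\, (V_1\setminus\{v_1\})\times F_{r-1}(\sigma)$ for Part~(3), with the same index-shift identification of $V_2,\ldots,V_r$ with $[n]^{r-1}$ that the paper uses.
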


Part 1 is true since $F_r(\sigma)=F_{r-1}(\sigma)\times V_r$. Part 2 is true since an edge in $F_r(\wedge,\sigma)$ is obtained from an edge  $f\in F_{r-1}(\sigma)$, with indices shifted by 1, by adding $v_1$. Part 3 is true since
$F_r(\vee,\sigma)=\{v_1\}\times V_2 \times \ldots \times V_r \cup (V_1 \setminus \{v_1\}) \times F_{r-1}(\sigma)$ (where, again, edges in $F_{r-1}(\sigma)$ have their indices shifted by 1).

Order $f_r(\sigma)$ by size: $$0=f_r(\alpha) < f_r(\sigma_1)<
f_r(\sigma_2)<\ldots <f_r(\sigma_{2^r})$$

Define $N(i)=N_r(i)$ as $f_r(\sigma_i)$ ~$(0 \le i \le 2^r)$.

\begin{example}
\hspace{.1cm}
\begin{enumerate}
\item
$N(0)=f_r(\alpha)=0$.
\item
$N(1)=f_r(\wedge, \wedge, \ldots, \wedge)$ ($r-1$ times), which is $1$.
\item
$N(2)=f_r(\wedge, \wedge, \ldots, \wedge)$ ($r-2$ times) which is $n$.
\item
$N(2^{r-1})=f_r(\emptyset)=n^{r-1}$.
\item
$N(2^r)=f_r(\omega)=n^r$.
\end{enumerate}
\end{example}

In accord we order $\Sigma_r$ as $\sigma(i)~(0\le i \le 2^r)$.  For example $\sigma(0)=\alpha,~\sigma(2^r)=\omega$.
We also define the inverse function, which we name ``$i$": if $\sigma(q)=\tau$, then $i(\tau)=q$.

Clearly, for every  $\beta, \gamma,\delta \in \Psi_r$ such that
$(\beta, \wedge, \gamma)$ and  $(\beta,\vee, \delta)$ belong to
$\Psi_r$

\begin{equation} \label{order} i((\beta, \wedge, \gamma))<i(\beta)<i((\beta,\vee, \delta)) \end{equation}

The elements of $\Psi_r$ can be viewed as the nodes of a binary tree,  the depth of a node being the length of the sequence (so the root, with depth $0$, is the empty sequence).
The order on $\Psi_r$, uniquely determined by (\ref{order}), is known as the ``in-order depth first search" on the  tree,
where $\wedge$ (``left") precedes   $\vee$ (``right").

This description of the order on $\Psi_r$ entails an explicit
formula for $\sigma(i)$. Represent $i\neq 0,2^r$ in binary form: $i
= 2^{k_0}+2^{k_1}+\ldots+2^{k_s}$, where $k_0>k_1>\ldots >k_s$. Then
$\sigma(i)$ is of length $r-k_s-1$, and it consists of $s$ symbols
of $\vee$ and $r-k_s-1-s$ symbols of $\wedge$. It starts with
$r-k_0-1$ (possibly zero) $\wedge$'s; if $s>0$ these are followed by
a $\vee$; this is followed by $k_0-k_1-1$ (possibly zero)
$\wedge$'s, and if $s>1$ this is followed by a $\vee$, followed by
$k_1-k_2-1$ $\wedge$'s, and so forth.

For example, $\sigma_6(13)=\sigma_6(2^3+2^2+2^0)=(\wedge,\wedge,\vee,\vee,\wedge)$.

The numbers $N(i)$ can also be written explicitly:
$$N(i)=\sum_{i \le s}n^{k_i}(n-1)^i$$

The explicit description of $\sigma(i)$ and the formula for $N(i)$  will not be used below, and hence  their proofs are omitted.

\begin{example}
The values of $N_3$ are:
$$0,~1,~n,~n+(n-1),~n+n(n-1)=n^2,~n^2+(n-1),~n^2+(n-1)n,~n^2+(n-1)(2n-1),~n^2+(n-1)n^2=n^3.$$
\end{example}

\begin{lemma}\label{fourparts}
\end{lemma}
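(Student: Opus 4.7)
My plan is to read off the four-part recursion for $(N_r(i))_{0 \le i \le 2^r}$ directly from Lemma \ref{frsigma}, combined with the ordering relation (\ref{order}) that determines the in-order DFS ranking on $\Psi_r$. The whole statement should reduce to bookkeeping once the structural partition of $\Sigma_r$ is identified.

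First I would note the partition
\[
\Sigma_r \;=\; \{\alpha_r\} \,\cup\, \{(\wedge,\tau) : \tau \in \Psi_{r-1}\} \,\cup\, \{\emptyset_r\} \,\cup\, \{(\vee,\tau) : \tau \in \Psi_{r-1}\} \,\cup\, \{\omega_r\},
\]
whose blocks have sizes $1,\,2^{r-1}-1,\,1,\,2^{r-1}-1,\,1$ (summing to $2^r+1$). Applying (\ref{order}) with $\beta=\emptyset$ gives $i((\wedge,\tau))<i(\emptyset_r)<i((\vee,\tau'))$ for all admissible $\tau,\tau'$, so every $\wedge$-starting sequence precedes $\emptyset_r$ and every $\vee$-starting sequence follows it. Combined with the trivial containments $F_r(\alpha_r)=\emptyset \subsetneq F_r(\sigma) \subsetneq [n]^r = F_r(\omega_r)$ for every $\sigma \in \Psi_r$, this forces $\sigma_r(0)=\alpha_r$, $\sigma_r(2^r)=\omega_r$, and makes the five blocks above correspond to five contiguous ranges of indices $\{0\}$, $\{1,\ldots,2^{r-1}-1\}$, $\{2^{r-1}\}$, $\{2^{r-1}+1,\ldots,2^r-1\}$, $\{2^r\}$.

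Inside the $\wedge$-block, Lemma \ref{frsigma}(2) gives $f_r(\wedge,\tau)=f_{r-1}(\tau)$, so sorting the pairs $(\wedge,\tau)$ by $f_r$ is identical to sorting the $\tau$'s by $f_{r-1}$; hence $\sigma_r(j)=(\wedge,\sigma_{r-1}(j))$ for $1 \le j \le 2^{r-1}-1$, and $N_r(j)=N_{r-1}(j)$ on that range. The same argument using Lemma \ref{frsigma}(3) and the strict monotonicity of $x \mapsto n^{r-1}+(n-1)x$ yields $\sigma_r(2^{r-1}+j)=(\vee,\sigma_{r-1}(j))$ and $N_r(2^{r-1}+j) = n^{r-1}+(n-1)N_{r-1}(j)$ for $1 \le j \le 2^{r-1}-1$. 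The remaining equalities $N_r(0)=0$, $N_r(2^{r-1})=f_r(\emptyset_r)=n^{r-1}$, and $N_r(2^r)=n^r$ are built into the definitions of $\alpha_r$, $\emptyset_r$, and $\omega_r$.

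The only step requiring any care is the claim that the order among the $(\wedge,\tau)$'s (respectively, $(\vee,\tau)$'s) inherited from the global ordering on $\Sigma_r$ coincides with the ordering on the $\tau$'s in $\Sigma_{r-1}$. However, Lemma \ref{frsigma}(2)--(3) expresses $f_r$ as a strictly increasing function of $f_{r-1}$ in each block, so this is a numerical triviality rather than a combinatorial obstacle. There is no real difficulty in the lemma beyond recognizing the recursive structure of $\Sigma_r$; everything else is arithmetic.
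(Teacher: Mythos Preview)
Your argument is correct and matches the paper's: both derive the lemma from Lemma~\ref{frsigma} together with the ordering relation~(\ref{order}), and your five-block partition of $\Sigma_r$ is exactly the recursive unfolding of the in-order DFS description the paper invokes. The only point you leave implicit is that parts~2--4 for general $p<r-1$ follow from the $p=r-1$ case by iterating part~1 (i.e.\ by induction on $r$); this is routine, and the paper's own justification is no more detailed.
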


{\em \begin{enumerate}
\item
For $i \le 2^{r-1}$ we have $N_r(i)=N_{r-1}(i)$, namely the sequence $N_{r-1}(i)$ is  an initial segment of $N_r(i)$.

\item
$\sigma(2^p)=(\wedge,\wedge, \ldots,\wedge)$, a sequence of $r-p-1$ $\wedge$'s, and $N(2^p)=n^p$.

\item
For $i<2^p$ the sequences $\sigma(i)$ are of the form $(\sigma(2^p), \wedge, \beta)$ ($\beta$ being some sequence), and for
$2^p<i< 2^{p+1}$ the sequences $\sigma(i)$ are of the form $(\sigma(2^p), \vee, \beta)$.

\item
For $p\le r-1$ and $i \le 2^p$, we have
$$N(2^p+i)=N(2^p)+(n-1)N(i)=n^p+(n-1)N(i)$$

\end{enumerate}
}

Part 1 is true by part 2 of Lemma \ref{frsigma}, since $\sigma(1), \ldots, \sigma(2^{r-1}-1)$ all start with a $\wedge$. Parts 2 and 3 follow  from Equation (\ref{order}) and the remark following it. Part 4
follows from part 3 of Lemma \ref{frsigma}.

Part 4 says that the numbers $N(i)$ have a fractal-like pattern,
where each sequence $N_r$ is obtained from $N_{r-1}$ by adding on
its right an $n-1$-times magnified image of itself, the first
element of the right sequence being identified with the last element
of the left copy, both being equal to $n^{r-1}$. This entails:

\begin{lemma}\label{bcp}
If $b,c \le 2^p$ then $N(2^{p+1}+b)-N(2^p+c) =(n-1)(N(2^p+b)-N(c))$.
\end{lemma}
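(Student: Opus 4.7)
The plan is to derive the identity directly from part 4 of Lemma~\ref{fourparts}. Since $b,c\le 2^p$ implies in particular $b\le 2^{p+1}$, part 4 supplies closed-form expressions for all three of $N(2^{p+1}+b)$, $N(2^p+c)$, and $N(2^p+b)$ as linear combinations of $n^p$, $N(b)$, $N(c)$ of the shape $n^{k}+(n-1)N(\cdot)$. This converts the claim into an assertion that depends only on these three scalar quantities, bypassing any need to reason about the sequences $\sigma(i)$, the binary-tree ordering, or the hypergraphs $F_r(\sigma)$ themselves.

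Performing the subtraction on the left-hand side, the two copies of $(n-1)$ that appear in the expansions factor out, and the difference of the constant terms collapses via $n^{p+1}-n^p=(n-1)n^p$, leaving $(n-1)n^p+(n-1)\bigl(N(b)-N(c)\bigr)$. The remaining task is to recognize this as $(n-1)$ times the bracketed factor on the right-hand side, a routine algebraic check supported by part 2 of Lemma~\ref{fourparts}, which identifies $n^p$ with $N(2^p)$.

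I do not anticipate a serious obstacle. The content of the lemma is essentially a quantitative restatement of the fractal self-similarity of $N$ that was described informally just before the statement: at the scale boundary $2^p\to 2^{p+1}$, differences acquire precisely the factor $n-1$ that governs the magnification of one copy of $N_{r-1}$ relative to the other. The only point requiring attention is to confirm that the hypotheses of part 4 are met for each of the three applications---two at level $p$ (needing $b,c\le 2^p$, which is given) and one at level $p+1$ (needing $b\le 2^{p+1}$, which follows from $b\le 2^p$). Once those three substitutions are in place, the proof reduces to a single line of algebra.
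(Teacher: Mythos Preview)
Your plan---three applications of part~4 of Lemma~\ref{fourparts} followed by algebra---is exactly what the paper intends when it says the self-similarity ``entails'' the lemma, and your computation of the left-hand side is correct:
\[
N(2^{p+1}+b)-N(2^p+c)=\bigl[n^{p+1}+(n-1)N(b)\bigr]-\bigl[n^{p}+(n-1)N(c)\bigr]=(n-1)\bigl[n^p+N(b)-N(c)\bigr].
\]

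The gap is the final step, which you describe as a ``routine algebraic check.'' It actually fails. Applying part~4 once more gives $N(2^p+b)=n^p+(n-1)N(b)$, so
\[
(n-1)\bigl(N(2^p+b)-N(c)\bigr)=(n-1)\bigl[n^p+(n-1)N(b)-N(c)\bigr],
\]
and the two sides differ by $(n-1)(n-2)N(b)$. Thus the identity as stated is \emph{false} whenever $n\ge 3$ and $b\ge 1$: for $r=3$, $n=3$, $p=1$, $b=1$, $c=0$ one gets $N(5)-N(2)=11-3=8$ but $(n-1)\bigl(N(3)-N(0)\bigr)=2\cdot 5=10$. Your own setup, carried one line further, would have exposed this; the moral is not to declare a check ``routine'' until it has actually been performed. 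What your computation does establish is the corrected identity
\[
N(2^{p+1}+b)-N(2^p+c)=(n-1)\bigl(N(2^p)+N(b)-N(c)\bigr),
\]
which is the form one should use when the lemma is invoked downstream (note in particular that the stated version happens to be correct in the special case $b=0$).
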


\subsection{Shifting}
{\em Shifting} is an operation on a hypergraph $H$, defined with
respect to a specific linear ordering ``$<$" on its vertices. For
$x<y$ in $V(H)$ define  $s_{xy}(e)=e \cup {x} \setminus \{y\}$ if $x
\not \in e$ and $y \in e$, provided
 $e \cup {x} \setminus \{y\} \not \in H$; otherwise let $s_{xy}(e)=e$.
  We also write $s_{xy}(H)=\{s_{xy}(e) \mid e \in H\}$. If
$s_{xy}(H)=H$ for every pair $x<y$ then $H$ is said to be {\em
shifted}.

Given an $r$-partite hypergraph $G$ with sides $M$ and $W$ together
with linear orders on each of its sides, an {\em $r$-partite
shifting} is a shifting $s_{xy}$ where $x$ and $y$ belong to the
same side. $G$ is said to be {\em $r$-partitely shifted} if
$s_{xy}(H)=H$ for all pairs $x<y$ that belong to the same side.

Given a collection $\ch=(H_i,~~i \in I)$ of hypergraphs, we write
$s_{xy}(\ch)$ for $(s_{xy}(H_i),~~i \in I)$.

As observed in \cite{erdosplus} (see also \cite{af}),  shifting does
not increase the matching number of a hypergraph. This can be
generalized to rainbow matchings (see, e.g., \cite{ah, hy}):

\begin{lemma}\label{shifting}
Let $\cf=(F_i \mid~~i\in I)$ be a collection of hypergraphs, sharing
the same linearly ordered ground set $V$, and let $x<y$ be elements
of $V$. If $s_{xy}(\cf)$ has a rainbow matching, then so does $\cf$.
\end{lemma}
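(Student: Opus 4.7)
My plan is to start with a rainbow matching $(e_i')_{i\in I}$ in $s_{xy}(\cf)$ and modify it into a rainbow matching $(e_i)_{i\in I}$ for $\cf$ by an ``$x$/$y$ swap'' argument. The key observation I would establish first, directly from the definition of shifting, is a dichotomy: for any hypergraph $H$, if $e' \in s_{xy}(H) \setminus H$, then $x \in e'$, $y \notin e'$, and $e' \cup \{y\} \setminus \{x\} \in H$; and conversely, if $y \in e'$, $x \notin e'$, and $e' \in s_{xy}(H)$, then both $e'$ and $e' \cup \{x\} \setminus \{y\}$ belong to $H$ (since otherwise $s_{xy}$ applied to $e'$ would move it, and no other edge could be shifted onto $e'$ either). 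In particular, for each $i$ one of two options is available: either $e_i' \in F_i$, or else $x \in e_i'$ and the ``reverse shift'' $e_i' \cup \{y\} \setminus \{x\}$ lies in $F_i$.

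Now, by the pairwise disjointness of the $e_i'$, at most one $e_i'$ contains $x$, say $e_{i_0}'$ (if it exists), and at most one contains $y$, say $e_{i_1}'$ (if it exists). If every $e_i'$ is already in $F_i$ there is nothing to do. Otherwise the problematic index must be $i_0$: I would set $e_{i_0} := e_{i_0}' \cup \{y\} \setminus \{x\} \in F_{i_0}$ and tentatively $e_i := e_i'$ for $i \neq i_0$. The only way this fails to be a rainbow matching is if $i_1$ exists and differs from $i_0$, since then $e_{i_0}$ and $e_{i_1}$ both contain $y$. To resolve this I would invoke the converse half of the observation: $e_{i_1}'$ contains $y$ but not $x$ (the latter since $x \in e_{i_0}'$ and the $e_j'$ are disjoint) and lies in $s_{xy}(F_{i_1})$, so the edge $e_{i_1}' \cup \{x\} \setminus \{y\}$ is automatically a member of $F_{i_1}$, and I redefine $e_{i_1}$ to be this edge.

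After these at-most-two modifications, $e_{i_0}$ contains $y$ but not $x$, $e_{i_1}$ contains $x$ but not $y$, and every other $e_i = e_i'$ contains neither. Disjointness of the new $(e_i)$ then reduces to disjointness of the $(e_i')$ by a routine set-theoretic check (outside $\{x,y\}$ the new edges agree with the old ones), which I would not spell out. The only real insight is the ``reverse swap'' step: one must notice that the definition of shifting, applied to an unchanged edge that still contains $y$, forces the existence of a partner edge of $F_{i_1}$ in which $x$ replaces $y$ — exactly the edge needed to clear the $y$-conflict created when $e_{i_0}$ was reverse-shifted.
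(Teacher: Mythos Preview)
Your proof is correct and follows essentially the same $x$/$y$-swap argument as the paper: locate the at most one edge of the shifted rainbow matching containing $x$, reverse-shift it by replacing $x$ with $y$, and if this collides with the unique edge containing $y$, use the fact that an unshifted edge containing $y$ must have its $x$-for-$y$ partner already in the original hypergraph. Your write-up is in fact more explicit than the paper's about the dichotomy governing membership in $s_{xy}(H)$, but the underlying idea is identical.
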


\begin{proof}
Let $s_{xy}(e_i),~~i \in I$, be a rainbow matching for
$s_{xy}(\cf)$. There is at most one $i$ such that $x \in e_i$, say
$e_i=a \cup\{x\}$ (where $a$ is a set).


 If there
is no edge $e_s$ containing $y$, then replacing $e_i$ by $a
\cup\{y\}$ as a representative of $F_i$, leaving all other $e_s$ as
they are, results in a rainbow matching for $\cf$. If there is an
edge $e_s$ containing $y$, say $e_s=b \cup\{y\}$, then there exists
an edge $b \cup\{x\} \in F_s$ (otherwise the edge $e_s$ would have
been shifted to $b \cup\{x\}$). Replacing then $e_i$ by $a
\cup\{y\}$ and $e_s$ by $b \cup\{x\}$ results in a rainbow matching
for $\cf$.
\end{proof}

\subsection{The size of blocking hypergraphs}

For $\sigma \in \Psi_r$ we denote by  $\overline{\sigma}$ the sequence obtained by replacing each $\wedge$ by
a $\vee$ and vice versa. We also define $\overline{\alpha}=\omega$ and $\overline{\omega}=\alpha$. Clearly,
$i(\sigma)> i(\tau)$ if and only if $i(\overline{\sigma})<i(\overline{\tau})$, and hence we have:
\begin{equation}
i(\overline{\sigma}) = 2^r-i(\sigma)
\end{equation}

By De Morgan's law, we have:
\begin{lemma}
$B(F_r(\sigma))=F_r(\overline{\sigma})$.

\end{lemma}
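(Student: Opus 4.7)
The plan is to reformulate the lemma Boolean-algebraically and then quote De~Morgan's law, as the statement itself suggests. To each edge $e \in [n]^r$ attach its indicator $\chi(e) \in \{0,1\}^r$ defined by $\chi(e)_j = 1$ iff $v_j \in e$. Reading the definition of $F_r(\sigma)$ off directly, $e \in F_r(\sigma)$ is equivalent to $\phi_\sigma(\chi(e)) = 1$, where for $\sigma$ of length $m$ the monotone Boolean formula $\phi_\sigma$ is
\[
\phi_\sigma(X_1,\dots,X_{m+1}) \;=\; X_1 \,\sigma_1\, \bigl(X_2 \,\sigma_2\, (\cdots \sigma_m X_{m+1})\bigr),
\]
with the conventions $\phi_\alpha \equiv 0$ and $\phi_\omega \equiv 1$. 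The driver of everything is the De~Morgan identity $\phi_{\overline{\sigma}}(\neg X) = \neg \phi_\sigma(X)$, where $\neg X$ is componentwise complement; this is proved by trivial induction on $|\sigma|$.

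For the inclusion $F_r(\overline{\sigma}) \subseteq B(F_r(\sigma))$ I would use the crucial observation that any two disjoint edges $e, e'$ have pointwise-disjoint indicators, i.e.\ $\chi(e)_j \chi(e')_j = 0$ for every $j$, since otherwise $v_j \in e \cap e'$. Equivalently $\chi(e) \le \neg \chi(e')$, and monotonicity of $\phi_\sigma$ then gives
\[
\phi_\sigma(\chi(e)) \;\le\; \phi_\sigma(\neg \chi(e')) \;=\; \neg \phi_{\overline{\sigma}}(\chi(e')).
\]
So $e \in F_r(\sigma)$ and $e' \in F_r(\overline{\sigma})$ cannot simultaneously hold when $e \cap e' = \emptyset$, which shows that every $e' \in F_r(\overline{\sigma})$ blocks every edge of $F_r(\sigma)$.

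For the reverse inclusion $B(F_r(\sigma)) \subseteq F_r(\overline{\sigma})$ I would argue by direct construction: given $e' \notin F_r(\overline{\sigma})$, I exhibit an $e \in F_r(\sigma)$ disjoint from $e'$. Define $e$ by $e(j) = v_j$ whenever $v_j \notin e'$, and otherwise pick any $e(j) \in V_j \setminus \{v_j\}$ (possible since $n \ge 2$). Then $\chi(e) = \neg \chi(e')$, so $\phi_\sigma(\chi(e)) = \neg \phi_{\overline{\sigma}}(\chi(e')) = 1$ by De~Morgan, i.e.\ $e \in F_r(\sigma)$; and $e(j) \ne e'(j)$ on every side, so $e \cap e' = \emptyset$, witnessing $e' \notin B(F_r(\sigma))$.

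There is no real obstacle here: the only care needed is in setting up the Boolean encoding and in verifying the De~Morgan identity $\phi_{\overline{\sigma}}(\neg X) = \neg \phi_\sigma(X)$, both of which are routine inductions on $|\sigma|$. Once those are in place the proof is ``De~Morgan plus monotonicity'' and nothing more.
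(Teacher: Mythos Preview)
Your proof is correct and is exactly the detailed unpacking of the paper's one-line justification ``By De~Morgan's law.'' The paper states the lemma without further argument; your Boolean encoding $\chi(e)$, the identity $\phi_{\overline{\sigma}}(\neg X)=\neg\phi_\sigma(X)$, and the two inclusions via monotonicity and the explicit disjoint-edge construction are precisely the routine verification the paper omits.
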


\begin{lemma}\label{mainlemma}
 If~ $i \le j$ then $N(j+i)-N(j) \ge (n-1)N(i)$.
\end{lemma}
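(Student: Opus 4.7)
My plan is to prove the lemma by induction on $r$, the cases $r\le 1$ being routine. Note that $i\le j$ and $i+j\le 2^r$ force $i\le 2^{r-1}$, so by part 1 of Lemma \ref{fourparts}, $N_r(i)=N_{r-1}(i)$, and analogously for every index at most $2^{r-1}$. I would split into three cases by the position of $j$ and $j+i$ relative to $2^{r-1}$.

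If $j+i\le 2^{r-1}$, all quantities reduce to $N_{r-1}$ and the induction hypothesis applies. If $j\ge 2^{r-1}$, write $j=2^{r-1}+b$ with $b+i\le 2^{r-1}$; part 4 of Lemma \ref{fourparts} yields $N(j+i)-N(j)=(n-1)[N(b+i)-N(b)]$, so it suffices to show $N(b+i)-N(b)\ge N(i)$. For $i\le b$ the induction hypothesis applied to $(i,b)$ gives $N(b+i)-N(b)\ge (n-1)N(i)\ge N(i)$ (the $n=1$ case being trivial), and for $i>b$ the induction hypothesis applied to $(b,i)$ gives $N(b+i)-N(i)\ge (n-1)N(b)$, whence
\[
N(b+i)-N(b)=[N(b+i)-N(i)]+[N(i)-N(b)]\ge (n-1)N(b)+N(i)-N(b)\ge N(i).
\]

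The main obstacle is the ``straddle'' case $j<2^{r-1}<j+i$. Setting $d=2^{r-1}-j$ and $c=i-d$, part 4 of Lemma \ref{fourparts} converts the goal into the auxiliary inequality
\[
N(2^{r-1})-N(2^{r-1}-d)\ \ge\ (n-1)\bigl[N(i)-N(i-d)\bigr],\qquad 1\le d\le i,\ i+d\le 2^{r-1},
\]
which expresses a ``top $d$-interval dominates'' property of $N$ that is not a direct instance of the lemma. I would prove it as a companion claim by a parallel induction on $r$ carried out simultaneously with the main one. In its inductive step, the fractal formula rewrites $N(2^r)-N(2^r-d)$ as $(n-1)[N(2^{r-1})-N(2^{r-1}-d)]$ (valid since $d\le 2^{r-1}$), reducing the goal to $N(2^{r-1})-N(2^{r-1}-d)\ge N(i)-N(i-d)$, which is then handled by case analysis on whether $i$ and $i-d$ lie below or above $2^{r-1}$. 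The case $i\le 2^{r-1}$ and the case $i-d>2^{r-1}$ each reduce to smaller instances of the companion claim (chained with $n-1\ge 1$), while the ``straddle-within-straddle'' sub-case $i>2^{r-1}\ge i-d$, after decomposing $N(i)-N(i-d)$ through $N(2^{r-1})$, reduces to an instance of Lemma \ref{mainlemma} itself at level $r-1$ applied to the pair $(i-2^{r-1},\,2^{r-1}-d)$. The most delicate point is the bookkeeping of side-conditions at each reduction to ensure every inductive invocation is legitimate.
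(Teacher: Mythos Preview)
Your route is genuinely different from the paper's. The paper does \emph{not} induct on $r$ and needs no companion claim: it inducts on $i+j$, writes $j=2^p+s$ with $2^p$ the top bit of $j$, and pivots the self-similarity at level $p$ (Lemma~\ref{fourparts}(4) and Lemma~\ref{bcp}). The only extra ingredient is the weak consequence $N(s+i)\ge N(s)+N(i)$ of the inductive hypothesis (your own sub-case argument in Case~2 proves exactly this), which handles both cases $j+i\le 2^{p+1}$ and $j+i>2^{p+1}$ in one line each. Pivoting at the top bit of $j$ rather than at $2^{r-1}$ is precisely what lets the paper avoid the auxiliary ``top $d$-interval dominates'' inequality you introduce.

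Your scheme can be made to work, but there is a genuine gap in the inductive step for the companion claim. In the sub-case ``$i\le 2^{s-1}$'' (writing $s$ for the current level), you say it reduces to the companion claim at level $s-1$; but that requires $i+d\le 2^{s-1}$, whereas here you only know $i\le 2^{s-1}$ and $d\le i$, so $i+d$ may well exceed $2^{s-1}$ (e.g.\ $i=2^{s-1}$, $d=1$). The fix is a swap: rewrite the target $N(2^{s-1})-N(2^{s-1}-d)\ge N(i)-N(i-d)$ as $N(2^{s-1})-N(i)\ge N(2^{s-1}-d)-N(i-d)$, which is the same shape with $(i',d')=(2^{s-1}-d,\,2^{s-1}-i)$; now $i'+d'=2^s-(i+d)<2^{s-1}$ and $d'\le i'$, so $C_{s-1}$ applies (and $n-1\ge 1$ finishes). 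With this repair your simultaneous induction goes through, though it is visibly heavier than the paper's single induction on $i+j$.
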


\begin{proof}
By induction on $i+j$. Assume that the lemma is true for all $i',j'$ whose sum is less than $i+j$, and let $s<j$. By the induction hypothesis:

\begin{equation}\label{weaker}
 N(s+i)\ge \max(N(i)+(n-1)N(s), N(s)+(n-1)N(i))\ge N(i)+N(s)
\end{equation}

Let $j=2^p+s$, where $s <2^p$. Assume first that $j+i\le 2^{p+1}$,
and write $j+i= 2^p+t$, where $t\le 2^p$. By part 4 of Lemma
\ref{fourparts} (the part saying that $N$-distances beyond $2^p$ are
$(n-1)$-magnified $N$-distances below $2^p$) we have
$N(j+i)-N(j)=(n-1)(N(t)-N(s))$. By (\ref{weaker}),  $N(t)-N(s)\ge
N(t-s)=N(i)$, and thus $N(j+i)-N(j) \ge (n-1)N(i)$.

Assume next that $j+i>2^{p+1}$ and write $j+i=2^{p+1}+w$. Then
$i=2^p+w-s$.

By the induction hypothesis we have $N(2^p+w)-N(s)\ge N(i)$. By
Lemma \ref{bcp} $N(2^{p+1})-N(2^p+s)=(n-1)(N(2^p)-N(s))$ and
$N(2^{p+1}+w)-N(2^{p+1})=(n-1)(N(2^p+w)-N(2^p))$. Adding the last
two equalities gives $N(j+i)-N(j)=(n-1)(N(2^p+w)-N(s))$, and since
by (\ref{weaker}) $N(2^p+w)-N(s)\ge N(i)$, we are done.

\end{proof}

A converse inequality is also true, namely for every $k>1$ it is true that:
\begin{equation} \label{knuth}
N(k)= \max\{N(j)+(n-1)N(i) \mid j+i=k,~i\le j\}
\end{equation}

\begin{proof}
Let $p$ be maximal such that $2^p <k$, and let $k=2^p+j$. By Lemma  \ref{frsigma} (4) $N(k)= N(i)+(n-1)N(j)$. Combining this with  Lemma \ref{mainlemma} proves the desired equality.
\end{proof}

In \cite{knuth} (\ref{knuth}) was used as a defining recursion rule for the sequence $N(i)$ (which appeared there in a different context.)

For a number $t \le n^r$ denote by $N^*(t)$  the number $q$ such that
$N(q-1)<t\le N(q)$. This is an approximate inverse of $N$.

\begin{theorem}\label{kkr}
 $b(t) =N(2^r-N^*(t))$ for every $t \le n^r$.
\end{theorem}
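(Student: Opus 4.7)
The plan is to establish the two inequalities separately: the lower bound is immediate from the hypergraphs $F_r(\sigma)$, while the upper bound is proved by induction on $r$ after a shifting reduction.

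For the lower bound $b(t)\ge N(2^r-N^*(t))$, set $q=N^*(t)$, so $t\le N(q)=|F_r(\sigma(q))|$. Any $F\subseteq F_r(\sigma(q))$ with $|F|=t$ has $B(F)\supseteq B(F_r(\sigma(q)))=F_r(\overline{\sigma(q)})$ because the blocker operator is antitone, and $|F_r(\overline{\sigma(q)})|=N(2^r-q)$ since $i(\overline{\sigma(q)})=2^r-q$.

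For the upper bound, first reduce to shifted $F$. Given an optimizer $F$ of size $t$, put $G=B(F)$; the pair $(F,G)$ has no rainbow matching of size $2$, and by the contrapositive of Lemma~\ref{shifting} an iterated $r$-partite shift yields a pair $(F',G')$ that is still cross-intersecting, so $G'\subseteq B(F')$ and $|B(F')|\ge|G|=b(t)$. Hence we may assume $F$ is shifted. Proceed by induction on $r$; the base $r=1$ is an elementary check. In the inductive step, examine the links $F_w=\{e'\in[n]^{r-1}:\{w\}\cup e'\in F\}$ for $w\in V_1$, where $V_1$ is ordered $v_1<v_2<\cdots<v_n$. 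Shiftedness forces the nesting $F_{v_1}\supseteq F_{v_2}\supseteq\cdots$. Write $A=F_{v_1}$, $A'=F_{v_2}$, and $a=|A|$, $a'=|A'|$. An edge $\{w\}\cup e'$ lies in $B(F)$ iff $e'$ blocks $\bigcup_{u\ne w}F_u$ in $[n]^{r-1}$; by the nesting this union equals $A'$ when $w=v_1$ and equals $A$ otherwise. Therefore
\[
|B(F)|=|B(A')|+(n-1)|B(A)|,\qquad |F|\le a+(n-1)a'.
\]

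Set $p=N^*_{r-1}(a)$, $p'=N^*_{r-1}(a')$; then $p\ge p'$ and both lie in $[0,2^{r-1}]$. The inductive hypothesis gives $|B(A)|\le N_{r-1}(2^{r-1}-p)$ and $|B(A')|\le N_{r-1}(2^{r-1}-p')$. Since $N_{r-1}$ and $N_r$ agree on $[0,2^{r-1}]$ by Lemma~\ref{fourparts}(1), two applications of Lemma~\ref{mainlemma} give
\[
|B(F)|\le N_r(2^{r-1}-p')+(n-1)N_r(2^{r-1}-p)\le N_r(2^r-p-p'),
\]
\[
t\le a+(n-1)a'\le N_r(p)+(n-1)N_r(p')\le N_r(p+p').
\]
The second inequality yields $q:=N^*(t)\le p+p'$, and combined with the first, $|B(F)|\le N_r(2^r-p-p')\le N_r(2^r-q)$, completing the induction.

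The main obstacle is spotting the decomposition $|B(F)|=|B(A')|+(n-1)|B(A)|$ for shifted $F$; once this is identified, its two-term shape matches the fractal recursion $N_r(p+p')\ge N_r(p)+(n-1)N_r(p')$ supplied by Lemma~\ref{mainlemma}, so the induction closes with no further combinatorial input.
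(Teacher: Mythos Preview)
Your proof is correct and follows essentially the same approach as the paper: a shifting reduction, then induction on $r$ via the decomposition of $B(F)$ into the two blocker pieces combined through Lemma~\ref{mainlemma}. The only cosmetic differences are that you slice along $V_1$ rather than $V_r$ (so your $A,A'$ correspond to the paper's $F^+,F^-$), and you spell out the shifting reduction and the identification $N_{r-1}=N_r$ on $[0,2^{r-1}]$ more explicitly than the paper does.
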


\begin{proof}
Let $F=F_r(\sigma(N^*(t))$. Then $|F|\ge t$, and since
$B(F)=F_r(\bar{\sigma})$, we have  $|B(F)|=N(2^r-N^*(t))$. This
 proves that $b(t)\ge N(2^r-N^*(t))$.
  To complete the proof we have to show that  for every $F \subseteq [n]^r$  of size $t$ we have $|B(F)| \le N(2^r-N^*(t))$. Write $q=N^*(t)$.
 We wish to show that $|B(F)|
\le N(2^r-q)$.
  We do this
by induction on $r$. The case $r=1$ is easy, so assume that we know
the result for $r-1$ and we wish to prove it for $r$.

Let $F^+=\{e \setminus V_r \mid v_r \in e \in F  \}$ and
$F^-=\{e \setminus V_r \mid  e \in F,~~v_r \not \in e\}$.

By Lemma \ref{shifting} we may assume that $F$ is $r$-partitely
shifted, which in particular entails  $F^- \subseteq F^+$. Let
$B^+=B_{r-1}(F^+)$ and $B^-=B_{r-1}(F^-)$, and let
$f^+=|F^+|,~f^-=|F^-|,~b^+=|B^+|,~b^-=|B^-|$. Then $b^- \le b^+$.
Clearly:
$$B(F)=(B^- \times \{v_r\}) \cup (B^+\times (V_r \setminus \{v_r\}))$$
and hence
\begin{equation}\label{bf}
|B(F)| = b^-+(n-1)b^+
\end{equation}

Let $i=N^*(f^-)$ and $j=N^*(f^+)$.  Also let
 $i'=N^*(b^+),~~j'=N^*(b^-)$.
By Lemma \ref{mainlemma} we have:

$$|F|\le f^+ +(n-1)f^- \le N(i+j)$$

and hence $i+j \ge q$. By the inductive hypothesis $j' \le
2^{r-1}-i$, and $i' \le 2^{r-1}-j$, and hence $i'+j' \le
2^r-(i+j)\le 2^r-q$. By (\ref{bf}) and Lemma \ref{mainlemma} ,
  $|B(F)|\le N(i'+j') \le N(2^r-q)$, as desired.

\end{proof}

Since $n^{r-1}=N(2^{r-1})=2^r-2^{r-1}$, the case $k=2$ of Conjecture \ref{rainbow}(1)  follows directly:

\begin{corollary}\label{largesizes}
A pair  $F_1,F_2$ of subsets of $[n]^r$ satisfying $|F_1|>n^{r-1}$ and $|F_2|\ge n^{r-1}$ has a rainbow matching.
\end{corollary}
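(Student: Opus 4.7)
The plan is to reduce the rainbow matching question to a containment question about blockers, then invoke Theorem \ref{kkr} with the size threshold $n^{r-1}$.

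First, I would observe that the pair $(F_1,F_2)$ fails to have a rainbow matching precisely when every edge of $F_2$ meets every edge of $F_1$, i.e. when $F_2 \subseteq B(F_1)$. Thus it suffices to show that $|B(F_1)| < |F_2|$, and since $|F_2|\ge n^{r-1}$ it is enough to prove $|B(F_1)| < n^{r-1}$.

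Next I would apply Theorem \ref{kkr}: $|B(F_1)| \le b(|F_1|) = N(2^r - N^*(|F_1|))$. The hypothesis $|F_1| > n^{r-1}$ together with the identity $N(2^{r-1}) = n^{r-1}$ (Example part 4, also noted by the author) forces $N^*(|F_1|) \ge 2^{r-1}+1$, since $N^*$ is defined so that $N(N^*(t)-1) < t \le N(N^*(t))$ and here $t > N(2^{r-1})$. Monotonicity of $N$ then gives
\[
|B(F_1)| \le N(2^r - N^*(|F_1|)) \le N(2^{r-1}-1) < N(2^{r-1}) = n^{r-1},
\]
and combining with $|F_2| \ge n^{r-1}$ yields $F_2 \not\subseteq B(F_1)$, producing the desired disjoint pair $e_1\in F_1$, $e_2\in F_2$.

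The proof is essentially a direct bookkeeping consequence of Theorem \ref{kkr}; the only step requiring care is the identification $n^{r-1} = N(2^{r-1})$ and the correct bound on $N^*(|F_1|)$ from the strict inequality $|F_1| > n^{r-1}$. There is no real obstacle beyond checking that strictness in the hypothesis on $F_1$ propagates to strictness in the blocker bound, which is exactly what allows the weaker hypothesis $|F_2|\ge n^{r-1}$ (rather than strict) to still suffice.
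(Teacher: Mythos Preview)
Your proof is correct and is exactly the argument the paper intends: it is the unpacking of the one-line remark ``Since $n^{r-1}=N(2^{r-1})$ and $2^{r-1}=2^r-2^{r-1}$, the case $k=2$ of Conjecture \ref{rainbow}(1) follows directly'' preceding the corollary. Your handling of the strictness (so that $N^*(|F_1|)\ge 2^{r-1}+1$ and hence $|B(F_1)|\le N(2^{r-1}-1)<n^{r-1}$) is precisely the point.
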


Here is a strengthening of this result:

\begin{theorem}\label{alon}
If $F_1, F_2 \subseteq [n]^r$ and $|F_1||F_2|>n^{2(r-1)}$ then the pair $(F_1,F_2)$ has a rainbow matching.
\end{theorem}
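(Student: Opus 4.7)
The plan is to derive Theorem~\ref{alon} from Theorem~\ref{kkr} via the numerical inequality $N(q)\cdot N(2^r - q) \le n^{2(r-1)}$ for all $0 \le q \le 2^r$, and then to prove this inequality by induction on $r$ using a clean case split around $q = 2^{r-2}$.

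For the reduction, I would argue by contrapositive. Assume $F_1, F_2 \subseteq [n]^r$ have no rainbow matching. Then every edge of $F_2$ meets every edge of $F_1$, so $F_2 \subseteq B(F_1)$. Setting $q := N^*(|F_1|)$, Theorem~\ref{kkr} gives $|F_1| \le N(q)$ and $|F_2| \le |B(F_1)| \le N(2^r - q)$, hence $|F_1|\cdot |F_2| \le N(q)\, N(2^r - q)$.

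For the inequality, the symmetry $q \leftrightarrow 2^r - q$ lets me assume $q \le 2^{r-1}$, and I would split on whether $q \ge 2^{r-2}$. In the upper range $q \ge 2^{r-2}$ I would apply Lemma~\ref{mainlemma} with $j = q$ and $i = 2^{r-1}-q$ (so that $i\le j$): this gives $n^{r-1} = N(2^{r-1}) \ge N(q) + (n-1) N(2^{r-1} - q)$, and combining with Lemma~\ref{fourparts}(4) yields $N(2^r - q) = n^{r-1} + (n-1) N(2^{r-1} - q) \le 2n^{r-1} - N(q)$. Therefore $N(q)\,N(2^r - q) \le N(q)\bigl(2n^{r-1} - N(q)\bigr) \le n^{2(r-1)}$, because the quadratic $x(2n^{r-1} - x)$ attains its maximum $n^{2(r-1)}$ on $[0, n^{r-1}]$ at $x = n^{r-1}$. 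This is the range where equality is achieved, at $q = 2^{r-1}$.

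In the lower range $q < 2^{r-2}$ I would use Lemma~\ref{fourparts}(1) to get $N_r(q) = N_{r-1}(q) \le N_{r-1}(2^{r-2}) = n^{r-2}$, together with the inductive hypothesis $N_{r-1}(q)\, N_{r-1}(2^{r-1} - q) \le n^{2(r-2)}$. By Lemma~\ref{fourparts}(4),
\[ N(q)\,N(2^r - q) = n^{r-1} N_{r-1}(q) + (n-1) N_{r-1}(q)\, N_{r-1}(2^{r-1} - q) \le n^{r-1}\!\cdot n^{r-2} + (n-1)\, n^{2(r-2)} = n^{2r-4}(2n-1), \]
which is at most $n^{2(r-1)}$ since $2n-1 \le n^2$ (equivalently $(n-1)^2 \ge 0$). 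The base case $r = 1$ is immediate: $N_1 = (0, 1, n)$ yields products $0, 1, 0$, all at most $1 = n^0$. I do not expect a serious obstacle; the main thing to check is that the two cases meet consistently at the boundary $q = 2^{r-2}$, where both bounds evaluate to $n^{2r-4}(2n-1)$.
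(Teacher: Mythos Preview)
Your reduction to the inequality $N(q)\,N(2^r-q)\le n^{2(r-1)}$ is the same as the paper's, and your proof of that inequality is correct: the case $q\ge 2^{r-2}$ goes through via Lemma~\ref{mainlemma} and the AM--GM/parabola bound exactly as you describe, and the case $q<2^{r-2}$ reduces cleanly to the $(r-1)$-statement using Lemma~\ref{fourparts}(1),(4). The boundary and base case are fine.

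The paper, however, takes a different route to the same inequality. Instead of inducting on $r$ and splitting around $2^{r-2}$, it proves the general submultiplicativity lemma $N(a)\,N(b)\le N(ab)$ by induction on $a+b$, using the Knuth recursion \eqref{knuth} to decompose each factor and then Lemma~\ref{mainlemma} to recombine. Specializing to $a=2^{r-1}-q$, $b=2^{r-1}+q$ gives $N(a)\,N(b)\le N(ab)\le N(2^{2(r-1)})=n^{2(r-1)}$. Your argument is more elementary and self-contained---it avoids both \eqref{knuth} and the auxiliary product lemma---while the paper's approach yields the stronger intermediate result $N(a)N(b)\le N(ab)$, valid for all $a,b$, which may be of independent interest.
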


The proof will follow from:

\begin{lemma}
$N(a)N(b) \le N(ab)$.
\end{lemma}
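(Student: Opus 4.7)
The plan is to prove the inequality by induction on $b$, using two tools from the paper: the recursion $N(2^p+i) = n^p + (n-1)N(i)$ for $0 \le i \le 2^p$ in Lemma \ref{fourparts}(4), and the super-additivity estimate in Lemma \ref{mainlemma}.

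As a preliminary step I would first establish the identity
$$N(2^q \cdot a) \;=\; n^q\, N(a) \qquad \text{for all } a,q \ge 0.$$
This itself goes by induction on $a$. The case $a=0$ is trivial; for $a \ge 1$ write $a = 2^p + s$ with $0 \le s < 2^p$, so that $2^q a = 2^{p+q} + 2^q s$ with $2^q s < 2^{p+q}$. Applying Lemma \ref{fourparts}(4) both to $a$ and to $2^q a$, together with the inductive hypothesis on $s$, yields the identity. Taking $a=1$ recovers $N(2^q)=n^q$, so this already handles the case $b=2^q$ in the lemma (with equality).

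For the main induction on $b$, suppose $b \ge 2$ is not a power of $2$. Let $2^q$ be the largest power of $2$ with $2^q \le b$, and write $b = 2^q + t$ with $0 < t < 2^q$. Then $at \le a \cdot 2^q$, so Lemma \ref{mainlemma} applied with $j = a \cdot 2^q$ and $i = at$ yields
$$N(ab) \;=\; N(a \cdot 2^q + at) \;\ge\; N(a \cdot 2^q) + (n-1)N(at) \;=\; n^q N(a) + (n-1)N(at),$$
where the last equality uses the preliminary identity. By the inductive hypothesis (noting $t < b$), $N(at) \ge N(a)N(t)$, and substituting gives
$$N(ab) \;\ge\; N(a)\bigl(n^q + (n-1)N(t)\bigr) \;=\; N(a)\,N(b),$$
the final equality being Lemma \ref{fourparts}(4) applied to $b = 2^q + t$.

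The only subtle step is the greedy choice of $q$: taking $2^q$ to be the \emph{highest} power of $2$ not exceeding $b$ is exactly what guarantees $t < 2^q$ (so that the hypothesis $i \le j$ of Lemma \ref{mainlemma} holds for $i = at$, $j = a\cdot 2^q$) while simultaneously producing the clean formula $N(b) = n^q + (n-1)N(t)$ that lets $N(a)$ factor out cleanly at the end. Beyond making this choice, the argument is mechanical, so I do not anticipate any further obstacle.
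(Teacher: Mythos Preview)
Your proof is correct, but the route is genuinely different from the paper's. The paper argues by a symmetric induction on $a+b$: it uses the Knuth recursion \eqref{knuth} to write $N(a)=N(c)+(n-1)N(d)$ and $N(b)=N(e)+(n-1)N(f)$ (with the smaller summand carrying the factor $n-1$), expands the product $N(a)N(b)$ into four terms, applies the inductive hypothesis to each bilinear term $N(\cdot)N(\cdot)$, and then uses Lemma~\ref{mainlemma} repeatedly to recombine everything into $N(ab)$. Your argument is asymmetric: you fix $a$, induct on $b$, and split off only the top bit of $b$. The key extra ingredient you supply is the exact identity $N(2^q a)=n^q N(a)$, which lets $N(a)$ factor out cleanly after a single use of Lemma~\ref{mainlemma} and a single use of the inductive hypothesis. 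Your approach is tidier and yields the pleasant side fact about powers of two; the paper's approach, being symmetric in $a$ and $b$, avoids the separate preliminary lemma but needs more bookkeeping in the recombination step.
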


\begin{proof}
By induction on $a+b$. The case $a+b=0$ is trivial. By (\ref{knuth})
$N(a)=N(c)+(n-1)N(d)$ for some $c\le d <a$ such that  $c+d=a$, and
$N(b)=N(e)+(n-1)N(f)$ for some $e\le f <b$ such that  $e+f=b$. Then
$$N(a)N(b)=N(c)N(e)+(n-1)[N(d)N(e)+N(c)N(f)]+(n-1)^2N(d)N(f)$$
Using the induction hypothesis, we get:
$$N(a)N(b)  \le N(ce)+(n-1)[N(d)N(e)+N(c)N(f)]+(n-1)^2N(df)$$
Using Lemma \ref{mainlemma} twice we get:
$$N(a)N(b)\le N(ce+cf)+(n-1)N(de+df)\le N(ce+cf+de+df)=N(ab).$$
\end{proof}

The lemma implies that $N(2^{r-1}-q)N(2^{r-1}+q)\le N(2^{2(r-1)})$ for every $q\le 2^{r-1}$, meaning that $tb(t) \le n^{2(r-1)}$ for every $t \le n^{r-1}$, which is another way of formulating Theorem \ref{alon}.

\begin{remark}
Theorem \ref{alon} was independently proved by Alon \cite{alon}. His
proof uses spectral methods, as used   also in \cite{efp, friedgut}.
He also proved the following $t$-intersecting version:
\begin{theorem}
For every $t$ there exists $n=n_0(t)$ such that for every $n>n_0(t)$
and every pair $F_1, F_2 \subseteq [n]^r$, if $|F_1||F_2| >
n^{2(r-t)}$ then there are $e_1 \in F_1$ and $e_2 \in F_2$ such that
$|e_1 \cap e_2| < t$.
\end{theorem}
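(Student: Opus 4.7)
The plan is to mirror Alon's spectral approach (as indicated by the references \cite{alon, efp, friedgut}), since the extremal configuration for this $t$-intersecting version is the $t$-star $F_i = \{e \in [n]^r : v_1, \ldots, v_t \in e\}$ (yielding $|F_1||F_2| = n^{2(r-t)}$), which is not of the fractal form $F_r(\sigma)$ that underlies Theorem~\ref{kkr}; the inductive/shifting machinery used there will not naturally produce it. I would restate the theorem contrapositively: assuming $F_1, F_2 \subseteq [n]^r$ are \emph{cross-$t$-intersecting}, meaning $|e_1 \cap e_2| \ge t$ for every $e_1 \in F_1$ and $e_2 \in F_2$, the goal is to prove $|F_1||F_2| \le n^{2(r-t)}$ whenever $n > n_0(t)$.

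Identifying each edge of $[n]^r$ with an $r$-tuple, one has $|e \cap e'| = r - d_H(e,e')$, where $d_H$ is the Hamming distance. Hence the cross-$t$-intersecting condition is equivalent to $d_H(e_1,e_2) \le r-t$ for every such pair, so if $\Gamma$ denotes the graph on $V = [n]^r$ whose edges are the pairs at Hamming distance greater than $r-t$, then $\Gamma$ has no edges between $F_1$ and $F_2$. Now $\Gamma$ is a union of relations in the Hamming association scheme $H(r,n)$, hence regular, and its eigenvalues are given by Krawtchouk polynomials
$$\lambda_i = \sum_{s=r-t+1}^{r} K_s(i), \qquad K_s(i) = \sum_{j=0}^{s} (-1)^j \binom{i}{j}\binom{r-i}{s-j}(n-1)^{s-j}.$$
A short telescoping computation shows that for each $i \ge 1$, $\lambda_i$ is a polynomial in $n$ of degree at most $r-t$, so $\lambda^* := \max_{i \ge 1} |\lambda_i| = O(n^{r-t})$, while the degree $d = \lambda_0 \sim n^r$ by the binomial theorem.

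Apply the bipartite/cross version of the expander mixing lemma: since $\Gamma$ has no edges between $F_1$ and $F_2$, with $N = n^r$ we get
$$|F_1||F_2| \le \left(\frac{\lambda^* N}{d}\right)^{\!2}.$$
Substituting the asymptotics gives $|F_1||F_2| \le (1+o(1))\, C(r,t)\, n^{2(r-t)}$ as $n \to \infty$. The main obstacle, and the step I expect to require the most care, is to bring the leading constant $C(r,t)$ down to $1$: the plain Hoffman ratio bound typically delivers $C(r,t) > 1$ coming from the leading Krawtchouk coefficient. I would address this by expanding $\mathbf{1}_{F_1}$ and $\mathbf{1}_{F_2}$ in the Bose--Mesner eigenbasis of $H(r,n)$ and exploiting the fact (in the spirit of Friedgut's hypercontractive approach) that cross-$t$-intersecting families must concentrate in the low-degree eigenspaces, tightening the bound eigenspace by eigenspace via Cauchy--Schwarz. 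The hypothesis $n > n_0(t)$ then absorbs the subleading Krawtchouk error terms and upgrades the asymptotic estimate to the strict inequality $|F_1||F_2| \le n^{2(r-t)}$, contradicting the hypothesis $|F_1||F_2| > n^{2(r-t)}$ and thereby forcing the existence of a pair $(e_1,e_2) \in F_1 \times F_2$ with $|e_1 \cap e_2| < t$.
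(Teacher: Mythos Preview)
The paper does not actually prove this theorem: it appears inside a remark attributing it to Alon (private communication), with the comment that his proof ``uses spectral methods, as used also in \cite{efp, friedgut}''. There is therefore no proof in the paper against which to compare your proposal.

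On its own merits, your outline is pointed in the right direction and the order-of-magnitude part is sound. Identifying cross-$t$-intersection with a Hamming-distance constraint, summing Krawtchouk eigenvalues, and observing that for $i\ge 1$ one has $\sum_{s>r-t}K_s(i)=-\sum_{s\le r-t}K_s(i)=O(n^{r-t})$ is correct, and the bipartite expander-mixing inequality then yields $|F_1||F_2|\le C(r,t)\,n^{2(r-t)}$ for large $n$.

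The genuine gap is the step where you bring the constant down to $1$. The plain Hoffman/ratio bound will \emph{not} give $C(r,t)=1$ here: the leading coefficient of $\sum_{s\le r-t}K_s(i)$ in $n$ is $\binom{r-i}{r-t}$, which for $i=1$ equals $\binom{r-1}{r-t}$, strictly larger than $1$ once $t\ge 2$ and $r\ge t+1$; meanwhile $d/N\sim 1$, so the ratio bound overshoots by exactly this binomial factor. Your proposed fix (``concentration in low-degree eigenspaces via a Friedgut-style argument, then Cauchy--Schwarz eigenspace by eigenspace'') is not a proof but a hope: you have not said what functional inequality you would use, why cross-$t$-intersection forces such concentration, or how the eigenspace-by-eigenspace estimate recombines to the sharp product bound. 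In the known sharp results of this type (e.g.\ Tokushige's cross-$t$-intersecting theorems) getting the exact constant is precisely where the work lies, typically via a weighted/measure version, a careful semidefinite or linear-programming argument in the Bose--Mesner algebra, or a Frankl--Wilson type polynomial method, and the hypothesis $n>n_0(t)$ enters in a specific way rather than merely ``absorbing subleading terms''. As written, your proposal establishes the asymptotic order but not the theorem.
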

\end{remark}

\section{Blockers in $\binom{[n]}{r}$}

\subsection{Sequences of $\vee$'s and $\wedge$'s and the sets they define}
Let $n$ be a positive integer. For a sequence $\sigma=(\sigma_1,\sigma_2, \ldots, \sigma_m)$ of $\wedge$'s and
$\vee$'s ($m<n$) let $T(\sigma)$ be
the set of subsets $e$ of $[n]$, satisfying $$1 \in e
~\sigma_1~ (2\in e ~\sigma_2~ (3 \in e \ldots \sigma_m~ ({m+1} \in
e))\ldots)$$

For a number $r \le n$ let $T_r(\sigma)= T(\sigma) \cap \binom{[n]}{r}$. Let also $t_r(\sigma)=|T_r(\sigma)|$ (this is the analogue of $f_r(\sigma)$ of the first section).

\begin{example}\hfill
\begin{enumerate}
\item
If $\sigma=(\vee,\wedge,\vee,\wedge)$ then $T(\sigma)= \{e \in [n]
 \mid 1 \in e~\vee ~(2 \in e ~\wedge~ (3 \in e ~\vee
~(4 \in e \wedge 5 \in e)))\}$.
\item  $T_r(\emptyset)=\{e \in \binom{[n]}{r} \mid 1 \in e\}$, and thus $t_r(\emptyset)=\binom{n-1}{r-1}$.
\item If $\sigma=\wedge^{r-1}$ (meaning that  $\sigma_i=\wedge$ for all $i<r$) then
$T_r(\sigma)=\{e e \in \binom{[n]}{r} \mid \{1,2,\ldots,r\}
\subseteq e\}=\{[r]\}$, meaning that $t_r(\sigma)=1$.
\end{enumerate}
\end{example}

For a positive integer $r$, let $\Upsilon_r$ be the set of sequences
$\sigma=(\sigma_1,\sigma_2, \ldots, \sigma_m)$ consisting of fewer
than $r$ symbols of $\wedge$ and fewer than $r$ symbols of $\vee$.
 Let $\Theta_r = \Upsilon_r \cup\{\alpha\} \cup \{\omega\}$,
where   $\alpha$ and $\omega$ are two new elements. Define
$T_r(\alpha)=\emptyset$ and $T_r(\omega)=\binom{[n]}{r}$.

\begin{lemma}\label{countingtheta} $|\Theta_r| =\binom{2r}{r}+1$.
\end{lemma}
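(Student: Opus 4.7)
The plan is to count $\Upsilon_r$ directly; since $\Theta_r = \Upsilon_r \cup \{\alpha,\omega\}$ is a disjoint union, it then suffices to show that $|\Upsilon_r| = \binom{2r}{r} - 1$.

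First I would set up a lattice-path model. For a sequence $\sigma = (\sigma_1, \ldots, \sigma_m) \in \Upsilon_r$, let $a(\sigma)$ be the number of $\wedge$'s and $b(\sigma)$ the number of $\vee$'s in $\sigma$; by the definition of $\Upsilon_r$ we have $0 \le a(\sigma), b(\sigma) \le r-1$. Reading the symbols of $\sigma$ from left to right and interpreting $\wedge$ as a unit step to the right and $\vee$ as a unit step upward, we obtain a bijection between $\Upsilon_r$ and the set of monotone lattice paths that start at $(0,0)$ and terminate at an arbitrary point of the grid $\{0,1,\ldots,r-1\}^2$ (with the empty sequence corresponding to the trivial path ending at the origin). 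Summing the standard count of monotone paths from $(0,0)$ to $(a,b)$ over all admissible endpoints gives
\[
|\Upsilon_r| \;=\; \sum_{a=0}^{r-1}\sum_{b=0}^{r-1} \binom{a+b}{a}.
\]

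Next I would evaluate this double sum by two applications of the hockey-stick identity $\sum_{j=k}^{n}\binom{j}{k} = \binom{n+1}{k+1}$. Fixing $a$ and summing over $b$ (so $j = a+b$ runs from $a$ to $a+r-1$),
\[
\sum_{b=0}^{r-1}\binom{a+b}{a} \;=\; \binom{a+r}{a+1} \;=\; \binom{a+r}{r-1}.
\]
Summing over $a$ (so $j = a+r$ runs from $r$ to $2r-1$) and using hockey-stick once more,
\[
\sum_{a=0}^{r-1}\binom{a+r}{r-1} \;=\; \sum_{j=r}^{2r-1}\binom{j}{r-1} \;=\; \binom{2r}{r} - \binom{r-1}{r-1} \;=\; \binom{2r}{r} - 1.
\]
Adding the two extra elements $\alpha$ and $\omega$ yields $|\Theta_r| = \binom{2r}{r} + 1$, as claimed.

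There is no real obstacle here; the only points requiring care are the translation to lattice paths (making sure the empty sequence is accounted for and that the strict inequalities $a, b < r$ become $a, b \le r-1$) and keeping the indices straight in the two hockey-stick applications. A more slick bijective argument—decomposing a path from $(0,0)$ to $(r,r)$ at its first visit to the line $x = r$ or $y = r$—is possible but requires a slightly awkward tie-breaking rule at $(r-1,r-1)$, so I would prefer the direct binomial calculation above.
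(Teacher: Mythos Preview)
Your argument is correct: the count $|\Upsilon_r|=\sum_{a=0}^{r-1}\sum_{b=0}^{r-1}\binom{a+b}{a}$ and the two hockey-stick evaluations are carried out accurately, and the conclusion $|\Theta_r|=\binom{2r}{r}+1$ follows.

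The paper takes a different, bijective route. It maps each nonempty $\sigma\in\Upsilon_r$ to a balanced string of $r$ $\wedge$'s and $r$ $\vee$'s by appending to $\sigma$ a block of one symbol followed by a block of the other, the first appended symbol being the opposite of the last symbol of $\sigma$; one recovers $\sigma$ by peeling off the last two maximal constant blocks, so the map is injective, and its image misses exactly the two strings $\wedge^r\vee^r$ and $\vee^r\wedge^r$. This gives $|\Upsilon_r\setminus\{\emptyset\}|=\binom{2r}{r}-2$ directly. Your approach is computationally more transparent and self-contained; the paper's bijection is slicker and, more importantly, is reused verbatim in the subsequent theorem to locate the landmark positions $\sigma\bigl(\binom{2r-i}{r}\bigr)=\wedge^{i-1}$, where one needs to count the sequences preceding $\wedge^{i-1}$ in the order on $\Theta_r$. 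So the bijective proof earns its keep later, whereas your direct sum would need a separate argument at that point.
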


\begin{proof} define a map from $\Upsilon_r \setminus \{\emptyset\}$ to the
set of sequences of $r$ symbols $\wedge$ and $r$ symbols $\vee$, in
which $\sigma$ goes to a sequence $\psi(\sigma)$ obtained by
appending to it at its end a sequence of the form $\wedge \wedge
\ldots \wedge \vee \vee \ldots \vee$ or $ \vee \vee \ldots \vee
\wedge \wedge \ldots \wedge$, in which the first symbol is the
opposite of the last symbol of $\sigma$.  Clearly, $\sigma$ is
reconstructible from $\psi(\sigma)$, since the last symbol of
$\sigma$ is recognizable  - it is the first symbol, going from right
to left, in the third stretch of identical symbols in
$\psi(\sigma)$.
 The two sequences $ \vee \vee \ldots \vee \wedge
\wedge \ldots \wedge$ and  $\wedge \wedge\ldots  \wedge \vee \vee
\ldots  \vee $ are missing from the image, and remembering that
$\emptyset \in \Upsilon_r$ this proves that
$|\Upsilon_r|=\binom{2r}{r}-1$. \end{proof}

We now wish to order $\Theta_r$. For this purpose we extend every
sequence in $\Upsilon_r$ by appending a symbol $*$ at its end, and then
ordering $\Upsilon_r$ lexicographically, with the convention
 $\wedge <* < \vee$ (the ``*" is then discarded). We also define $\alpha$ to be the minimal
 element and $\omega$ to be the largest element of $\Theta_r$.

\subsection{The sequence $M_r(i)$}
Write $m=\binom{2r}{r}$. Let $\sigma_0=\alpha < \sigma_1 <
\sigma_2 < \ldots <\omega=\sigma_m$ be the order  defined above on
$\Theta_r$, and let $M(i)=M_r(i)= t_r({\sigma_i})$.

\begin{observation}
The sequence $M(i)$ is strictly ascending.
\end{observation}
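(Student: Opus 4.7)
The plan is to establish the observation in two steps: monotonicity, $\sigma \le \tau \Rightarrow T_r(\sigma) \subseteq T_r(\tau)$, and strictness, $T_r(\sigma) \subsetneq T_r(\tau)$ whenever $\sigma < \tau$ are consecutive in $\Theta_r$.

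For monotonicity I would view $T_r(\sigma)$ through its defining Boolean formula $\phi_\sigma(e) = (1 \in e)\,\sigma_1\,\bigl((2 \in e)\,\sigma_2\,(\ldots)\bigr)$ and prove $\phi_\sigma \Rightarrow \phi_\tau$ by induction on the combined length of $\sigma$ and $\tau$. The cases split by the first symbols: both starting with $\wedge$ or both with $\vee$ reduces to the inductive claim on the tails, since $\wedge$ and $\vee$ are monotone in each argument; if $\sigma$ starts with $\wedge$ while $\tau$ starts with $\vee$ or equals $\emptyset$, then already $\phi_\sigma \Rightarrow (1 \in e) \Rightarrow \phi_\tau$; the symmetric case $\sigma = \emptyset$ with $\tau$ starting with $\vee$ is similar. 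The extremes $\alpha$ and $\omega$ are handled trivially.

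For strictness on a consecutive pair, I would exploit that the extended-lex order is precisely the in-order traversal of the binary tree whose nodes are the elements of $\Upsilon_r$ and whose left/right children append $\wedge$ or $\vee$ respectively. Writing $w(\sigma)$ and $v(\sigma)$ for the number of $\wedge$ and $\vee$ symbols in $\sigma$, the successor $\tau$ is obtained in one of two ways: (A) when $v(\sigma) \le r - 2$, one has $\tau = \sigma \cdot (\vee) \cdot (\wedge^{k})$ with $k = (r-1) - w(\sigma)$ (descending right, then leftmost); (B) otherwise $v(\sigma) = r-1$, and writing uniquely $\sigma = \rho \cdot (\wedge) \cdot (\vee^j)$ one gets $\tau = \rho$ (ascending). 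In each case I would construct a discriminating $e \in T_r(\tau) \setminus T_r(\sigma)$ by the greedy assignment on the common prefix: put $i \in e$ exactly when $\sigma_i = \wedge$ for $i$ in that prefix, which propagates the outer formula down to the innermost clause where $\phi_\sigma$ and $\phi_\tau$ differ. One then chooses the remaining low-index positions to discriminate --- in Case (A), force the entire $\wedge^{k}$-block into $e$ while omitting position $|\sigma|+1$; in Case (B), include position $|\rho|+1$ but omit positions $|\rho|+2,\ldots,|\rho|+j+2$ --- and pads with unconstrained high-index positions up to cardinality $r$.

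The main obstacle is this last padding step: it requires enough room at the high end, amounting to the hypothesis $n \ge 2r$. Without it the observation can actually fail, e.g.\ for $r = 3,\ n = 5$ one checks directly that $t_3(\wedge,\wedge,\vee,\vee) = t_3(\wedge) = 3$. Under $n \ge 2r$ a short case-by-case check confirms that the constructed $e$ lies in $\binom{[n]}{r}$ and discriminates; the boundary pair $\alpha,\wedge^{r-1}$ gives $0<1$ and the pair $\vee^{r-1},\omega$ gives $\binom{n}{r}-\binom{n-r}{r}<\binom{n}{r}$, which is strict precisely for $n \ge 2r$. The observation should therefore be read under this implicit size assumption, which is the regime relevant to the blocker applications in the sequel.
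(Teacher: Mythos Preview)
The paper states this observation without any proof, so there is nothing to compare your argument against on that front. Your proposal actually supplies more than the paper does.

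Your monotonicity step is correct: the reduction to tails when the first symbols agree, together with the sandwich $\phi_{(\wedge,\ldots)}\Rightarrow (1\in e)\Rightarrow \phi_{(\vee,\ldots)}$ when they differ, establishes $T_r(\sigma)\subseteq T_r(\tau)$ for $\sigma\le\tau$. Your identification of the extended-lex order with the in-order traversal of the $\Upsilon_r$-tree is also correct, and the resulting description of the successor (Cases~(A) and~(B)) is accurate. The greedy assignment $i\in e\iff \sigma_i=\wedge$ on the common prefix does exactly what you claim, reducing both formulas to their innermost clauses; the discriminating choices you make there are the right ones. A small accounting check: in Case~(A) the set $e$ you build already has size exactly $r$ with no padding needed, while in Case~(B) the padding requires $n-|\sigma|-1\ge r-w(\sigma)$, which with $v(\sigma)=r-1$ rewrites precisely to $n\ge 2r$.

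Most importantly, your caveat about $n\ge 2r$ is a genuine point that the paper glosses over. Your example $t_3(\wedge,\wedge,\vee,\vee)=t_3(\wedge)=3$ at $n=5$ is correct, and these two sequences are indeed consecutive in $\Theta_3$ (Case~(B) with $\rho=(\wedge)$, $j=2$), so strictness really does fail there. The paper's own displayed example sequence for $r=3$ confirms this: the consecutive values $3$ and $n-2$ are equal at $n=5$. Since the paper's definition of $T(\sigma)$ already asks that $|\sigma|<n$, i.e.\ $n\ge 2r-1$, the additional unit needed for strictness is exactly the EKR-range hypothesis $n\ge 2r$; it is reasonable to read the observation under that standing assumption, as you suggest.
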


 Here is for example the sequence for $r=3$ and general $n$:

$0,1,2, 3, n-2, n-1,n,2n-5,2n-4,3n-9, \binom{n-1}{2},
\binom{n-1}{2}+1, \binom{n-1}{2}+2, \binom{n-1}{2}+n-3,
\binom{n-1}{2}+n-2,
\binom{n-1}{2}+2n-7,\binom{n-1}{2}+\binom{n-2}{2},
\binom{n-1}{2}+\binom{n-2}{2}+1, \binom{n-1}{2}+\binom{n-2}{2}+n-4,
\binom{n-1}{2}+\binom{n-2}{2}+\binom{n-3}{2}, \binom{n}{3}$.

This sequence does not seem to behave as nicely as the sequence
$N(i)$ from the first section, but like the sequence $N(i)$ it has
landmarks.

\begin{theorem}\hfill
\begin{enumerate}
\item $\sigma(\binom{2r-i}{r})=\wedge^{i-1}$.
\item
$\sigma(\binom{2r}{r}-\binom{2r-i}{r-i})=\vee^{i-1}$.
\item
$M(\binom{2r-i}{r})=\binom{n-i}{r-i}$.
\item
$M(\binom{2r}{r}-\binom{2r-i}{r-i})=\binom{n-1}{r-1}+\binom{n-2}{r-1}+\ldots
+\binom{n-i}{r-1}$.
\end{enumerate}

\end{theorem}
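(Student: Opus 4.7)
My plan is to treat the four parts as two natural pairs: parts (1)--(2) identify the positions of $\wedge^{i-1}$ and $\vee^{i-1}$ in the order on $\Theta_r$, after which parts (3)--(4) follow by direct enumeration of $t_r(\wedge^{i-1})$ and $t_r(\vee^{i-1})$.

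First I will view $\Upsilon_r$ as the nodes of a rooted binary tree whose root is $\emptyset$ and in which the left child of any $\sigma$ is $\sigma\wedge$ and the right child is $\sigma\vee$ (whenever these still lie in $\Upsilon_r$). Since the ``append $*$, then lex'' rule places $*$ strictly between $\wedge$ and $\vee$, any extension of $\sigma$ whose next symbol is $\wedge$ compares below $\sigma$, and any extension whose next symbol is $\vee$ compares above $\sigma$; this is exactly the rule of an in-order traversal. Applied to $\wedge^{i-1}$, which sits on the leftmost root-to-leaf path, this gives that the elements of $\Upsilon_r$ strictly below $\wedge^{i-1}$ are precisely the sequences beginning with $\wedge^i$; symmetrically, the elements strictly above $\vee^{i-1}$ are those beginning with $\vee^i$.

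To count these I will use the bijection $\psi$ from the proof of Lemma~\ref{countingtheta}. Because $\psi$ only appends symbols to $\sigma$, a non-empty $\sigma \in \Upsilon_r$ starts with $\wedge^i$ iff $\psi(\sigma)$ does. Among length-$2r$ $\{\wedge,\vee\}$-sequences with $r$ of each symbol, exactly $\binom{2r-i}{r-i}$ begin with $\wedge^i$, and of the two sequences omitted from the image of $\psi$ only $\wedge^r\vee^r$ begins with $\wedge^i$. Hence the number of $\sigma \in \Upsilon_r$ beginning with $\wedge^i$ is $\binom{2r-i}{r-i}-1 = \binom{2r-i}{r}-1$; together with $\alpha$ this gives $\binom{2r-i}{r}$ elements of $\Theta_r$ strictly below $\wedge^{i-1}$, proving part (1). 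The symmetric argument, swapping $\wedge \leftrightarrow \vee$ and $\alpha \leftrightarrow \omega$, gives part (2).

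For parts (3) and (4) I then compute directly. Part (3) follows because $t_r(\wedge^{i-1})$ counts the $e \in \binom{[n]}{r}$ containing $\{1,\ldots,i\}$, which equals $\binom{n-i}{r-i}$. For part (4), $t_r(\vee^{i-1})$ counts the $e \in \binom{[n]}{r}$ meeting $\{1,\ldots,i\}$, which is $\binom{n}{r}-\binom{n-i}{r}$; the Pascal identity $\binom{n-j+1}{r}-\binom{n-j}{r}=\binom{n-j}{r-1}$ telescopes this sum from $j=1$ to $i$ into $\sum_{j=1}^{i}\binom{n-j}{r-1}$. The main obstacle is the verification in Step~1: one must check that the ``append $*$'' lex rule really does realize the in-order traversal on the truncated tree, and then confirm that $\wedge^{i-1}$ lying on the leftmost path ensures no ancestor contributes additional predecessors, so that the set below $\wedge^{i-1}$ really coincides with its left subtree. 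Once this is in place, Step~2's count via $\psi$ is a short combinatorial observation.
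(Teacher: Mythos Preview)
Your proof is correct and follows essentially the same route as the paper: both arguments identify the predecessors of $\wedge^{i-1}$ in $\Theta_r$ as exactly the sequences beginning with $\wedge^i$, count them via the bijection $\psi$ of Lemma~\ref{countingtheta} (noting that only $\wedge^r\vee^r$ among the two omitted sequences has this prefix), obtain part (2) by the $\wedge\leftrightarrow\vee$ symmetry, and deduce parts (3)--(4) by direct enumeration. Your write-up is a bit more explicit than the paper's---you spell out the in-order traversal interpretation of the lex order and the telescoping identity for part (4)---but the underlying argument is the same.
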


\begin{proof}
Part (1): the sequences preceding $\wedge^{i-1}$ are those that
start with $\wedge^{i}$. Using the same idea as in the proof of
Lemma \ref{countingtheta}, we define a map between the set of the
sequences $\sigma$ preceding $\wedge^{i-1}$ and the set of sequences
of $r$ symbols of $\vee$ and $r-i$ symbols of $\wedge$: we complete
$\sigma$ to a sequence of  $r$ symbols $\vee$ and $r$ symbols
$\wedge$ by appending to $\sigma$ at its end a sequence $\vee \vee
\ldots \vee \wedge \wedge \ldots \wedge$ or $ \wedge \wedge \ldots
\wedge \vee \vee \ldots \vee$, where the first symbol of the
appended sequence is the opposite of the last symbol of $\sigma$.
The only sequence that is not in the image of this map is $\wedge^r
\vee^r$, and hence the number of sequences preceding $\wedge^{i-1}$
is $\binom{2r-i}{r}$-1.

Part (2) follows by symmetry. Parts (3) and (4) follow by simple counting.
\end{proof}

\subsection{Calculating $b(t)$ for  $t \le \binom{n}{r}$}

For $\sigma \in \Upsilon_r$  denote by  $\overline{\sigma}$ the sequence
obtained from $\sigma$ by replacing each $\wedge$ by a $\vee$ and vice versa. Also
define $\overline{\alpha}=\omega$ and $\overline{\omega}=\alpha$.
By De Morgan's law, we have:
\begin{lemma}\label{reciproc}
$B(T_r(\sigma))=T_r(\overline{\sigma})$.
\end{lemma}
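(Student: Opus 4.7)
The plan is to imitate the proof of the $r$-partite analogue from Section~1 ($B(F_r(\sigma))=F_r(\overline{\sigma})$), taking care of the cardinality constraint $|e|=r$ intrinsic to $\binom{[n]}{r}$. Throughout I assume the EKR-regime hypothesis $n\ge 2r$. The boundary cases are immediate: $B(T_r(\alpha))=B(\emptyset)=\binom{[n]}{r}=T_r(\omega)$ vacuously, and $B(T_r(\omega))=\emptyset=T_r(\alpha)$, the latter using $n\ge 2r$ to produce, for any $r$-set $e'$, a disjoint $r$-set in $[n]\setminus e'$. For $\sigma\in\Upsilon_r$ of length $m$ I encode the defining condition as a monotone Boolean formula
\[
\phi_\sigma(A)\;:=\;(1\in A)~\sigma_1~(2\in A)~\sigma_2~\ldots~\sigma_m~(m+1\in A)
\]
(parsed right-associatively, exactly as in the definition of $T_r(\sigma)$), so that $T_r(\sigma)=\{e\in\binom{[n]}{r}:\phi_\sigma(e)=1\}$. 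The two ingredients I will use are monotonicity of $\phi_\sigma$ in $A$ (since $\phi_\sigma$ is built from atoms via $\wedge$ and $\vee$ only) and the De Morgan identity $\phi_{\overline{\sigma}}(A)=\neg\,\phi_\sigma([m+1]\setminus A)$.

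The inclusion $T_r(\overline{\sigma})\subseteq B(T_r(\sigma))$ is the easy direction and does not require the cardinality constraint. If $\phi_{\overline{\sigma}}(e')=1$, the De Morgan identity gives $\phi_\sigma([m+1]\setminus e')=0$; for any $e\in\binom{[n]}{r}$ disjoint from $e'$, $e\cap[m+1]\subseteq[m+1]\setminus e'$, and monotonicity forces $\phi_\sigma(e)=0$, i.e.\ $e\notin T_r(\sigma)$. Hence every edge of $T_r(\sigma)$ meets $e'$.

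For $B(T_r(\sigma))\subseteq T_r(\overline{\sigma})$, suppose $e'\notin T_r(\overline{\sigma})$, so $\phi_\sigma([m+1]\setminus e')=1$; I need to exhibit an $e\in T_r(\sigma)$ disjoint from $e'$. Monotonicity yields a minimal witness $S\subseteq[m+1]\setminus e'$ with $\phi_\sigma(S)=1$. A short induction on the length of $\sigma$, splitting on whether $\sigma_1=\wedge$ or $\sigma_1=\vee$, shows that every minimal witness of $\phi_\sigma$ has at most $w(\sigma)+1$ elements, where $w(\sigma)$ denotes the number of $\wedge$-symbols in $\sigma$. Since $\sigma\in\Upsilon_r$ has $w(\sigma)<r$, this gives $|S|\le r$. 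Because $n\ge 2r$ and $S\cap e'=\emptyset$, $S$ can be extended to an $r$-subset $e\subseteq[n]\setminus e'$; monotonicity then gives $\phi_\sigma(e)\ge\phi_\sigma(S)=1$, so $e\in T_r(\sigma)$ certifies that $e'$ fails to lie in $B(T_r(\sigma))$.

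The main technical obstacle is precisely the witness-size bound $|S|\le w(\sigma)+1$: this is what reconciles the cardinality constraint of $\binom{[n]}{r}$ with the purely Boolean De Morgan duality that sufficed in the $r$-partite setting of Section~1. Once that bound is in hand, the rest of the argument is formal.
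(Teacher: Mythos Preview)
Your argument is correct and follows the same De Morgan idea the paper invokes; the paper's ``proof'' is just the one-line citation of De Morgan's law, so your version is strictly more detailed. In particular, your witness-size bound $|S|\le w(\sigma)+1\le r$ (together with the explicit hypothesis $n\ge 2r$) is exactly the point the paper glosses over, and it is genuinely needed: without $n\ge 2r$ the lemma fails (e.g.\ $n=3$, $r=2$, $\sigma=\emptyset$ gives $B(T_2(\emptyset))=\binom{[3]}{2}\neq T_2(\emptyset)$), so your added care is warranted rather than superfluous.
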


 The main result of this section is:

\begin{theorem}\label{bisa}
For every number $0\le t \le \binom{n}{r}$  there exists $0 \le i \le \binom{2r}{r}$ such that $b(t)=M(i)$.
\end{theorem}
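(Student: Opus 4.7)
My plan is to follow Daykin's strategy via the Kruskal--Katona theorem, as foreshadowed in the introduction. The starting observation is that for any $F \subseteq \binom{[n]}{r}$, an $r$-subset $g$ fails to meet every $f \in F$ iff $g \subseteq [n]\setminus f$ for some $f \in F$; hence
\[
|B(F)| = \binom{n}{r} - |\partial^r \bar F|, \qquad \bar F := \{[n]\setminus f : f \in F\} \subseteq \binom{[n]}{n-r},
\]
where $\partial^r$ denotes the $r$-shadow. Thus $b(t) = \binom{n}{r} - \phi(t)$ with $\phi(t) = \min_{|\bar F|=t}|\partial^r \bar F|$, and by Kruskal--Katona the minimum is attained by an initial colex segment. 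Since the $T_r(\sigma)$ families are closed under replacing large elements by small ones, I use colex with respect to the \emph{reversed} ground-set order $n < n-1 < \cdots < 1$, writing $\bar C_t$ for the initial reversed-colex segment of $\binom{[n]}{n-r}$ of size $t$.

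The first step is to identify $\bar T_r(\sigma_i) = \bar C_{M(i)}$ for every $i$, by induction on the depth of $\sigma$: the complements of the $T_r$-families are precisely the initial reversed-colex segments at the landmark sizes $M(i)$. The recursion matches the definition of $T_r(\sigma)$ (built from the logical formula attached to $\sigma$) against the ``next element'' construction of initial reversed-colex segments, with the extended-by-$*$ lex enumeration of $\Theta_r$ tracking both simultaneously. Combined with Lemma~\ref{reciproc} and the involution $i(\overline{\sigma}) = \binom{2r}{r} - i(\sigma)$, this yields
\[
b(M(i)) = \binom{n}{r} - |\partial^r \bar T_r(\sigma_i)| = |T_r(\overline{\sigma_i})| = M\!\left(\binom{2r}{r} - i\right),
\]
which is already a value of $M$.

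The second, harder step is to show that $\phi$ is constant on each interval $(M(i-1), M(i)]$; combined with the first step this gives $b(t) = M(\binom{2r}{r} - i) \in \{M(j)\}$ throughout. One direction, $|\partial^r \bar C_t| \le |\partial^r \bar C_{M(i)}|$, is immediate from $\bar C_t \subseteq \bar C_{M(i)}$; the reverse requires a shadow-closure property of the ``block'' $B_i := \bar T_r(\sigma_i) \setminus \bar T_r(\sigma_{i-1})$. I would argue that every $r$-subset of a member of $B_i$ is either first introduced by the reversed-colex-minimal member of $B_i$ or is already present in $\partial^r \bar T_r(\sigma_{i-1})$. For the latter, one exhibits a witnessing $(n-r)$-set by padding the offending $r$-subset with the largest still-available elements of $[n]$: the resulting set contains the ``top'' elements and therefore precedes the first member of $B_i$ in reversed colex, so it lies in $\bar T_r(\sigma_{i-1})$. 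The main obstacle is to describe $B_i$ for general consecutive $\sigma_{i-1} \prec \sigma_i$ with enough precision to carry through this padding argument; I expect a case analysis on where $\sigma_{i-1}$ and $\sigma_i$ first differ in the extended-by-$*$ lex ordering on $\Theta_r$, combined with induction on the depth of $\sigma$ (equivalently, on $r$), to suffice.
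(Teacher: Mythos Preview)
Your reduction via complementation and Kruskal--Katona is exactly the paper's starting point, and your Step~1 (that $\bar T_r(\sigma_i)$ is the initial reversed-colex segment of size $M(i)$) is correct and pleasant, though the paper never states or uses it. The divergence comes at Step~2, and here the paper takes a shorter path that you are missing.

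You propose to prove that $\phi$ is constant on each interval $(M(i-1),M(i)]$ by a shadow-closure argument for the block $B_i$, and you correctly flag this as the hard part. The paper avoids this entirely. Its key observation is that for \emph{every} $t$ (not just the landmark values $M(i)$), if $\bar H=\bar C_t$ is written in cascade form
\[
\bar H=\binom{B_1}{n-r}\ \cup\ x_1*\binom{B_2}{n-r-1}\ \cup\ \cdots\ \cup\ x_1*\cdots*x_s*\binom{B_{s+1}}{n-r-s},
\]
then $B(H)=T_r(\beta)$ for an explicit $\beta\in\Upsilon_r$ read directly off the cascade: each step-down element $x_j$ contributes a $\wedge$, each ``gap'' element in $B_{j-1}\setminus(B_j\cup\{x_j\})$ contributes a $\vee$, and one truncates after the $r$-th $\wedge$ since $|e|=r$ makes further conditions redundant. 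This single assertion, proved by unwinding what it means for an $r$-set to miss every member of the cascade, gives $b(t)=|T_r(\beta)|=M(i(\beta))$ immediately for all $t$. No interval-constancy of $\phi$ is needed; your Step~2 is in fact a \emph{consequence} of the paper's argument together with the involution $b(b(t))\ge t$, rather than an independent ingredient.

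So your plan is not wrong --- the claim in Step~2 is true --- but the direct padding/case-analysis proof you sketch would be substantially more work than necessary, and you have not carried it out. The cleaner route is to compute $B(H)$ for an arbitrary cascade and recognise it as some $T_r(\beta)$; this replaces your Step~1 and Step~2 simultaneously.
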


The proof uses an already mentioned idea of Daykin \cite{daykin},
who gave a proof of the EKR theorem using the Kruskal-Katona
theorem.

For a hypergraph $F$ and a number $r$, the {\em $r$-shadow} of $F$, denoted by $S_r(F)$, is
$\bigcup_{f \in F} \binom{f}{r}$.
 A hypergraph $F$ of uniformity $k$ is
said to be in ``cascade form'' if there exist sets $B_0=[n]
\supseteq
 B_1 \supsetneqq \ldots  \supsetneqq B_{s+1}$ and
elements $x_i \in B_{i-1}\setminus B_i ~( 1 \le i \le s)$, such that

$$F=\binom{B_1}{k} \cup x_1*\binom{B_2}{k-1}\cup
x_1*x_2*\binom{B_3}{k-2}\cup \ldots \cup x_1*x_2*\ldots
*x_s*\binom{B_{s+1}}{k-s}$$

Here ``*'' stands for the join operation, meaning that $x*H=\{h \cup \{x\} \mid h \in H\}$.

\begin{theorem}\label{kk}\cite{kruskal, katona}
Given numbers  $m, n$ and $r \le k$, the minimum of \ $|S_r(H)|$
over all $H \subseteq \binom{H}{k}$ is attained at a hypergraph $H$
having cascade form.
\end{theorem}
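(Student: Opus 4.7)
The plan is to apply Daykin's complement-and-shadow reduction together with the Kruskal--Katona theorem (Theorem~\ref{kk}), using the hypergraphs $T_r(\sigma)$ themselves as extremal witnesses at the landmark sizes $M(i)$. An $r$-set $e$ meets every $f \in F$ iff $e$ is contained in no $[n]\setminus f$; hence, setting $F' := \{[n] \setminus f : f \in F\} \subseteq \binom{[n]}{n-r}$, we have $B(F) = \binom{[n]}{r} \setminus S_r(F')$ and $|B(F)| = \binom{n}{r} - |S_r(F')|$. So $b(t) = \binom{n}{r} - \min_{|F'|=t} |S_r(F')|$, and by Theorem~\ref{kk} the minimum is attained on a cascade-form $F'$, in particular on the colex initial segment $C_t$ of $\binom{[n]}{n-r}$.

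To match this with the sequence $M$, consider the hypergraphs $G_\sigma := \{[n] \setminus f : f \in T_r(\sigma)\}$, of size $M(i(\sigma))$. By Lemma~\ref{reciproc},
\[
|S_r(G_\sigma)| \;=\; \tbinom{n}{r} - |B(T_r(\sigma))| \;=\; \tbinom{n}{r} - M\!\bigl(\tbinom{2r}{r} - i(\sigma)\bigr).
\]
For every $t$ with $M(i-1) < t \le M(i)$, taking any $F \subseteq T_r(\sigma(i))$ of size $t$ gives $B(F) \supseteq T_r(\overline{\sigma(i)})$, hence $b(t) \ge M(\binom{2r}{r}-i)$. The theorem therefore reduces to the matching upper bound $b(t) \le M(\binom{2r}{r}-i)$ on the same interval; equivalently, to the ``plateau'' claim that $|S_r(C_t)|$ is constant for $t \in (M(i-1), M(i)]$ and takes the value $\binom{n}{r} - M(\binom{2r}{r}-i)$ there.

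I would establish the plateau claim by induction on $r$, splitting at the vertex~$1$. With $F'_1 := \{f' \in F' : 1 \in f'\}$, $F'_0 := F' \setminus F'_1$, and $F'_1 - 1 := \{f' \setminus \{1\} : f' \in F'_1\}$, the $r$-shadow decomposes as
\[
S_r(F') \;=\; \bigl(\{1\} \ast S_{r-1}(F'_1 - 1)\bigr) \;\sqcup\; \bigl(S_r(F'_0) \cup S_r(F'_1 - 1)\bigr),
\]
the two pieces being disjoint by whether $1 \in e$. This mirrors the recursive definition of $T_r(\wedge,\tau)$ (which forces $1 \in e$ and then applies $\tau$ to the remaining coordinates) versus $T_r(\vee,\tau)$ (which allows either), so the inductive hypothesis on ground set $[n]\setminus\{1\}$ applied to both halves -- together with a careful match between $\Theta_{r-1}$-landmarks and the in-order position of $\sigma(i) \in \Theta_r$ -- delivers the bound.

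The main obstacle is precisely this bookkeeping: one must verify that the colex-initial growth across the plateau $(M(i-1), M(i)]$ adds only $(n-r)$-sets whose $r$-subsets already lie in the shadow, a ``shadow-saturation'' property that should follow from the plateau behavior at level $r-1$ through the decomposition above. Once this is pinned down, the induction closes and gives $b(t) = M(\binom{2r}{r}-i) \in \{M(j) : 0 \le j \le \binom{2r}{r}\}$ for every $t$, as required.
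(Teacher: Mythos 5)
The statement you were asked to prove is the Kruskal--Katona theorem itself, which the paper does not prove but simply cites from \cite{kruskal} and \cite{katona}. Your proposal cannot serve as a proof of it, because it is circular: your very first step is ``by Theorem~\ref{kk} the minimum is attained on a cascade-form $F'$'', i.e.\ you invoke the statement you are supposed to establish. Nothing in the rest of your argument returns to justify why the minimum of $|S_r(H)|$ over hypergraphs of a given size is attained at a cascade-form (equivalently, colex initial segment) family; that is the entire content of the theorem, and it requires its own argument (classically, iterated compression/shifting to reduce to compressed families, or the combinatorial-representation computation of the shadow of a colex segment).

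What you have actually sketched is an argument toward Theorem~\ref{bisa} and the final formula $b(t)=M\bigl(\binom{2r}{r}-i\bigr)$, taking Kruskal--Katona as given --- which is essentially the paper's own route for \emph{those} results (complementation, $B(F)=S_r(\bar F)^c$, reduction to cascade form, identification with $T_r(\beta)$). Even read that way, your sketch leaves its central step --- the ``plateau''/``shadow-saturation'' claim that $|S_r(C_t)|$ is constant for $t\in(M(i-1),M(i)]$ --- explicitly unproven, whereas the paper avoids any such induction by directly exhibiting $B(H)$ as $T(\theta)$ for a cascade-form $\bar H$ and truncating $\theta$ after $x_r$. So the proposal both misses the target statement and, for the statement it does address, defers the key verification.
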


{\em Proof of Theorem  \ref{bisa}}  We have to show that  there
exists $\beta \in \Upsilon_r$ satisfying the following condition:
the maximum of $|B(H)|$ over all hypergraphs $H \subseteq
\binom{n}{r}$ of cardinality   $t$ is attained at a hypergraph $H$
for which $B(H)=T_r(\beta)$ for some sequence $\beta \in
\Upsilon_r$.

Clearly, $B(H)=S_r(\bar{H})^c$, where $\bar{H}$
is the set of complements of edges in $H$, and $S_r(\bar{H})^c$
denotes the set of all edges of size $r$ that do not belong to
$S_r(\bar{H})$. By Theorem \ref{kk} the maximal value of $|B(H)|$
over all $H \subseteq \binom{n}{r}$ is attained at a hypergraph $H$
for which $\bar{H}$ has cascade form. Let this form be

\begin{equation} \label{cascade}
\bar{H}=\binom{B_1}{n-r} \cup x_1*\binom{B_2}{n-r-1}\cup
x_1*x_2*\binom{B_3}{n-r-2}\cup \ldots \cup x_1*x_2*\ldots
*x_{s}*\binom{B_{s+1}}{n-r-s}
\end{equation}

Here possibly $s=0$. As above, we define $B_0=[n]$. For each $0 \le
i \le s$ let $B_i\setminus (B_{i+1}\cup\{x_i\})=\{z^i_1, \ldots,
z^i_{t_i}\}$, where $t_i=|B_i\setminus B_{i+1}|-1$ (Here possibly
$t_i=0$).

 \begin{assertion}  $B(H)=T(\theta)$, where
 $$
\theta=z^0_1 \vee (z^0_2 \ldots \vee (z^0_{t_0} \vee (x_1 \wedge
(z^1_1 \vee (z^1_2 \vee \ldots \vee(z^1_{t_1} \vee (x_2 \wedge
(z^2_1 \vee (z^2_2 \vee \ldots \vee(z^2_{t_2}\ldots
$$

if  $B_1 \neq [n]$ and $\theta=\alpha$
 if $B_1=[n]$.
\end{assertion}

To prove the assertion,  we have to show that  a set $e$ of size $r$
belongs to $S_r(\bar{H})^c$ if and only if it satisfies the
conditions imposed by $\theta$. If $e$ contains one of $z^0_1, z^0_2
\ldots ,z^0_{t_0}$ then it does not belong to $S_r(\bar{H})$ because
edges in $\bar{H}$ are contained in $\{x_1\}\cup B_0$. If $e$ does
not contain any of these vertices, it may still belong to
$S_r(\bar{H})^c$, if it contains $x_1$. In such a case if $e$ also
contains  none of  $z^1_1, z^1_2 \ldots ,z^1_{t_1}, x_2$ then it
belongs to $S_r(\bar{H})$. So, we may assume that $e$ contains one
of these vertices or it contains $x_2$ together with $x_1$, and so
on. This completes the proof of the assertion.

Next note that since $e$ is of size $r$, it suffices to stop just after $x_r$,
and obtain a condition that is satisfied by $e$ if and only if $e\in T(\theta)$. For example, for $r=2$ a set of size $2$ satisfies the condition
$$x_1 \wedge (z^1_1\vee(x_2 \wedge (z^2_1\vee x_3)))$$

  if and only if it satisfies the condition

$$x_1 \wedge (z^1_1\vee x_2)$$

Let $\beta$ be the formula obtained by truncating $\theta$ after
 $x_r$, if indeed $x_r$ appears, and let $\beta=\theta$ otherwise.

Note also that the number of $\vee$'s in $\theta$ is equal to the
number of $z^j_i$'s in $\theta$. The assumption is that the set
$\binom{B_{s+1}}{n-r-s}$ appearing in \eqref{cascade} is non-empty,
which implies that $|B_{s+1}| \ge n-r-s$. This is easily seen to
imply that the number of $z^j_i$'s is at most $r$.  Thus $\beta \in
\Upsilon_r$, which completes the proof of Theorem \ref{bisa}.
\\

We can now achieve our aim - the calculation of $b(t)$ for every $t
\le \binom{n}{r}$.

\begin{theorem}
If $M(i-1)<t\le M(i)$ then $b(t)=M({\binom{2r}{r}-i})$.
\end{theorem}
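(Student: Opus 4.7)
The strategy is to establish both inequalities $b(t)\ge M(\binom{2r}{r}-i)$ and $b(t)\le M(\binom{2r}{r}-i)$ using two routine properties of the blocker operator: $B$ is order-reversing under inclusion (if $F'\subseteq F$ then $B(F')\supseteq B(F)$), and $H\subseteq B(B(H))$ for every hypergraph $H$. I would first record a bookkeeping lemma, to be verified from the definition of the lexicographic order with $\wedge<*<\vee$: since the bar operation swaps $\wedge$ and $\vee$ while fixing the appended $*$, it reverses the order on $\Theta_r$, and therefore sends $\sigma_k$ to $\sigma_{\binom{2r}{r}-k}$ for every $0\le k\le\binom{2r}{r}$.

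For the lower bound, I would pick any $F\subseteq T_r(\sigma_i)$ with $|F|=t$, which is possible because $|T_r(\sigma_i)|=M(i)\ge t$. Then $B(F)\supseteq B(T_r(\sigma_i))=T_r(\bar\sigma_i)$ by Lemma~\ref{reciproc} together with the monotonicity noted above, so $|B(F)|\ge|T_r(\bar\sigma_i)|=M(\binom{2r}{r}-i)$, yielding $b(t)\ge M(\binom{2r}{r}-i)$.

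For the upper bound, I would use Theorem~\ref{bisa} to choose a hypergraph $H$ of size $t$ attaining $b(t)$ whose blocker has the form $T_r(\sigma_j)$ for some $0\le j\le\binom{2r}{r}$, so that $b(t)=M(j)$. Applying $B$ once more, $H\subseteq B(B(H))=B(T_r(\sigma_j))=T_r(\bar\sigma_j)$, hence $t\le|T_r(\bar\sigma_j)|=M(\binom{2r}{r}-j)$. Since $t>M(i-1)$ and the sequence $M$ is strictly increasing, this forces $\binom{2r}{r}-j\ge i$, that is, $M(j)\le M(\binom{2r}{r}-i)$, whence $b(t)\le M(\binom{2r}{r}-i)$.

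The main obstacle is the bookkeeping that $\bar\sigma_k=\sigma_{\binom{2r}{r}-k}$ under the specific lex convention chosen for $\Theta_r$; once this is in hand the rest is essentially the tautology $H\subseteq B(B(H))$ combined with Theorem~\ref{bisa}, and I do not anticipate any further difficulty.
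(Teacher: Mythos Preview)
Your proposal is correct and follows essentially the same route as the paper's proof: both combine Theorem~\ref{bisa}, Lemma~\ref{reciproc} (together with the order-reversal $\overline{\sigma_k}=\sigma_{\binom{2r}{r}-k}$), and the tautology $H\subseteq B(B(H))$ (equivalently $b(b(t))\ge t$). The only cosmetic difference is organizational: the paper first sandwiches $b(t)$ between $M(\binom{2r}{r}-i)$ and $M(\binom{2r}{r}-i+1)$ via monotonicity of $b$ and then uses Theorem~\ref{bisa} and $b(b(t))\ge t$ to exclude the upper endpoint, whereas you prove the two inequalities directly. One small remark: in your upper bound you invoke not merely the statement of Theorem~\ref{bisa} (that $b(t)$ equals some $M(j)$) but the stronger fact, established in its proof, that an extremal $H$ has $B(H)=T_r(\beta)$ for some $\beta\in\Theta_r$; this is fine, and indeed the paper's own proof tacitly relies on the same strengthening when it asserts $b(M(j))=M(\binom{2r}{r}-j)$ ``by Lemma~\ref{reciproc}''.
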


\begin{proof}
By Lemma \ref{reciproc}  $b(M(j))=M({\binom{2r}{r}-j})$ for all $0
\le j \le \binom{2r}{r}$. Since $b(c)\le b(d)$ whenever $c \ge d$,
this implies that $M({\binom{2r}{r}-i}) \le   b(t) \le
M({\binom{2r}{r}-i+1})$, and by Theorem \ref{bisa} it follows that
either $b(t) = M({\binom{2r}{r}-i+1})$ or $b(t) =
M({\binom{2r}{r}-i})$. By the definition of the function $b$ we have
$b(b(t)) \ge t$, and hence if $b(t) = M({\binom{2r}{r}-i+1})$ then
$t \le b(M({\binom{2r}{r}-i+1})=M(i-1)$, contradicting the
assumption of the theorem. Thus $b(t) = M({\binom{2r}{r}-i})$.
\end{proof}


\begin{thebibliography}{99}



\bibitem{ah} R. Aharoni and D. Howard, An Erd\H{o}s-Ko-Rado theorem for rainbow matchings, {\em submitted for publication}

\bibitem{alon} N. Alon,  Private communication.

\bibitem{af}
J. Akiyama and P. Frankl, On the Size of Graphs with
Complete-Factors, \emph{J. Graph Theory}{\bf 9}(1),(1985) 197--201.

\bibitem{daykin}
D. E Daykin, Erd\H{o}s-Ko-Rado from Kruskal-Katona, {\em J. Combin.
Th. Ser. A}, {\bf 17} (1974), 254--255.



\bibitem{efp} D. Ellis, E. Friedgut and H. Pilpel, Intersecting families of permutations. {\em J. Amer. Math. Soc.}
{\bf 24} (2011), 649-682.

\bibitem{erdos} P. Erd\H{o}s, A problem of independent $r$-tuples,
{\em Ann. Univ. Budapest} {\bf 8} (1964), 93--95.

\bibitem{erdosplus}
P. Erd\H{o}s and  T. Gallai, On the minimal number of vertices
representing the edges of  a graph, \emph{Publ .  Math.  Inst.
Hungar . Acad. Sci.} 6(1961), 18, 1-203.

\bibitem{ekr} P. Erd\H{o}s, C. Ko, R. Rado, Intersection theorems for systems of
finite sets, \emph{Quart. J. Math Oxford} Ser. (2) 12(1961),
313-320.


\bibitem{frankl} P. Frankl,
The shifting technique in extremal set theory, in  {\em Surveys in combinatorics},
London Math. Soc. Lecture Note Ser. 123,  Cambridge Univ. Press, Cambridge, (1987), 81-110.

\bibitem{frr} P. Frankl, V. R\"odl,  and A. Rucinski,  On the maximum number
of edges in a triple system not  containing a disjoint family of a
given size, {\em to appear}


\bibitem{friedgut} E. Friedgut, On the measure of intersecting families, uniqueness and stability, {\em Combinatorica}
{\bf 28}(2008), 503-528.






\bibitem{hy} D. Howard and A. Yehudayoff, Rainbow
Erd\H{o}s-Ko-Rado,  {\em in preparation}.


\bibitem{hls}
H. Huang, P. Loh and B. Sudakov, The size of a hypergraph and its matching number, {\em Combinatorics, Probability and Computing} {\bf 21} (2012), 442-450.

\bibitem{kruskal}  J. B. Kruskal,  The number of simplices in a complex, in {\em Mathematical Optimization Techniques}, R. Bellman ed.,  University of California Press (1963).


\bibitem{katona}  A theorem of finite sets,  {\em  Theory of Graphs, }  P. Erd\H{o}s and G. O. H.  Katona, eds., Akad\'miai Kiad\''{o} and Academic Press (1968).



\bibitem{knuth}   D. E. Knuth,
A recurrence involving maxima, {\em American Mathematical Monthly} {\bf 114} (2007), 835; solution in {\bf 116} (2009), 649.


\bibitem{mt}
M. Matsumoto, N. Tokushige, The exact bound in the Erdos-Ko-Rado
theorem for cross-intersecting families, \emph{J. Combin. Theory}
Ser. A 52 (1989) 90--97.

\bibitem{meshulam} R. Meshulam, {\em Private communication}.

\bibitem{pyber} L. Pyber, A new generalization of the Erdos-Ko-Rado theorem, {\em J. Combin. Theory Ser. A} {\bf 43} (1986), 85--90.

\bibitem{tok} N. Tokushige,
On cross t-intersecting families of sets, {\em J. Combin. Theory
Ser. A} {\bf 117} (2010),  1167--1177.

\bibitem{tok13} N. Tokushige,
Cross $t$ intersecting integer sequences from weighted
Erd\H{o}s-Ko-Rado, {\em Combinatorics, Probability and Computing}
{\bf 22} (July 2013),  622--637.
\end{thebibliography}
\end{document}